\def\k{\kappa}
\newcommand{\mC}{\mathcal{C}}
\newcommand{\mE}{\mathcal{E}}
\newcommand{\mN}{\mathcal{N}}
\newcommand{\mO}{\mathcal{O}}
\newcommand{\ff}{\mathfrak{f}}
\newcommand{\fh}{\mathfrak{h}}
\newcommand{\fm}{\mathfrak{m}}
\newcommand{\bfC}{\mathbf{C}}
\newcommand{\bfF}{\mathbf{F}}
\newcommand{\bfQ}{\mathbf{Q}}
\newcommand{\bfR}{\mathbf{R}}
\newcommand{\bfT}{\mathbf{T}}
\newcommand{\bfZ}{\mathbf{Z}}
\newcommand{\Oo}{\mathcal{O}}
\newcommand{\ov}{\overline}
\newcommand{\be}{\begin{equation}}
\newcommand{\ee}{\end{equation}}
\newcommand{\bes}{\begin{equation*}}
\newcommand{\ees}{\end{equation*}}
\newcommand{\bs}{\begin{split}}
\newcommand{\es}{\end{split}}
\newcommand{\bss}{\begin{split*}}
\newcommand{\ess}{\end{split*}}
\newcommand{\bmat}{\left[ \begin{matrix}}
\newcommand{\emat}{\end{matrix} \right]}
\newcommand{\bsmat}{\left[ \begin{smallmatrix}}
\newcommand{\esmat}{\end{smallmatrix} \right]}
\newcommand{\bml}{\begin{multline}}
\newcommand{\eml}{\end{multline}}
\newcommand{\bmls}{\begin{multline*}}
\newcommand{\emls}{\end{multline*}}
\newcommand{\coh}{H}
\DeclareMathOperator{\ad}{ad}
\DeclareMathOperator{\alg}{alg}
\DeclareMathOperator{\Cl}{Cl}
\DeclareMathOperator{\cris}{cris}
\DeclareMathOperator{\diag}{diag}
\DeclareMathOperator{\End}{End}
\DeclareMathOperator{\ev}{ev}
\DeclareMathOperator{\Ext}{Ext}
\DeclareMathOperator{\Fil}{Fil}
\DeclareMathOperator{\Frob}{Frob}
\DeclareMathOperator{\Gal}{Gal}
\DeclareMathOperator{\GL}{GL}
\DeclareMathOperator{\GSp}{GSp}
\DeclareMathOperator{\Hom}{Hom}
\DeclareMathOperator{\LMFDB}{LMFDB}
\DeclareMathOperator{\Mat}{Mat}
\DeclareMathOperator{\semi}{ss}
\DeclareMathOperator{\SL}{SL}
\DeclareMathOperator{\Sp}{Sp}
\DeclareMathOperator{\spin}{spin}
\DeclareMathOperator{\Symm}{Sym^2}
\DeclareMathOperator{\Tr}{Tr}
\DeclareMathOperator{\tor}{tor}
\DeclareMathOperator{\un}{un}
\DeclareMathOperator{\val}{val}
\newcommand{\tr}{\textup{tr}\hspace{2pt}}
\newcommand{\invlim}{\mathop{\varprojlim}\limits}
\newcommand{\dirlim}{\mathop{\varinjlim}\limits}
\theoremstyle{oupplain}
\newtheorem{theorem}{Theorem}[section]
\newtheorem{lemma}[theorem]{Lemma}
\newtheorem{corollary}[theorem]{Corollary}
\newtheorem{prop}[theorem]{Proposition}
\newtheorem{assumption}[theorem]{Assumption}
\theoremstyle{oupdefinition}
\newtheorem{definition}[theorem]{Definition}
\theoremstyle{oupremark}
\newtheorem{rem}[theorem]{Remark}
\newtheorem{example}[theorem]{Example}
\theoremstyle{oupproof}
\numberwithin{equation}{section}
\begin{document}

\title{Klingen Eisenstein series congruences and modularity}

\author[Tobias Berger]{Tobias Berger}
\author[Jim Brown]{Jim Brown}
\author[Krzysztof Klosin]{Krzysztof Klosin}
\thanks{The third author was supported by the Travel Support for Mathematicians Gift (MP-TSM-00002098) from the Simons Foundation and by a
PSC-CUNY award jointly funded by the Professional Staff Congress and the City University of New York.}

\maketitle

\begin{abstract}
    We construct a mod $\ell$ congruence between a Klingen Eisenstein series (associated to a classical newform $\phi$ of weight $k$) and a Siegel cusp form $f$  with irreducible Galois representation. We use this congruence to show non-vanishing of the Bloch-Kato Selmer group $H^1_f(\bfQ, \ad^0\rho_{\phi}(2-k)\otimes \bfQ_{\ell}/\bfZ_{\ell})$ under certain assumptions and provide an example. We then prove an $R=dvr$ theorem for the Fontaine-Laffaille universal deformation ring of $\ov{\rho}_f$ under some assumptions, in particular,  that the residual Selmer group $H^1_f(\bfQ, \ad^0\ov{\rho}_{\phi}(k-2))$ is cyclic. For this we prove a result about extensions of Fontaine-Laffaille modules. We end by formulating conditions for when $H^1_f(\bfQ, \ad^0\ov{\rho}_{\phi}(k-2))$ is non-cyclic and the Eisenstein ideal is non-principal.
\end{abstract}

\section{Introduction}
The construction of Eisenstein congruences has a long and consequential history. Interesting in their own right, their significance is amplified by the existence of Galois representations attached to the congruent forms, as the ones attached to Eisenstein series are always reducible while the ones attached to cusp forms are often irreducible.
Using various generalizations of the result known as Ribet's Lemma, they lead to the construction of non-zero elements in Selmer groups. This direction was first explored by Ribet himself in the context of the group $\GL_2$ in \cite{RibetInvent76} and later used by many other authors in a variety of different settings e.g. \cite{Wiles90}, \cite{BrownCompMath07},  \cite{SkinnerUrbanMC}. 

In a different direction, such congruences can play a crucial role in proving modularity of deformations  of  reducible residual Galois representations $\ov{\rho}$, see e.g. \cite{SkinnerWiles97}, \cite{BergerKlosin13}, \cite{BergerKlosin20}, \cite{BergerKlosin23}, \cite{Calegari06}, \cite{WWE20}, and \cite{Wake23}. 
In \cite{Calegari06} Calegari introduced a method of proving modularity assuming $\ov{\rho}$ is unique up to isomorphism, which relies on proving the principality of the ideal of reducibility of the universal deformation ring $R$ of $\ov{\rho}$. This method was developed further by Berger and Klosin \cite{BergerKlosin11, BergerKlosin13, BergerKlosin20} and Wake and Wang-Erickson \cite{WWE20} and successfully applied in many contexts (see also \cite{Akers24, Huang24}). It relies heavily on the ideas of Bellaiche and Chenevier \cite{BellaicheChenevierbook} and their study of Generalized Matrix Algebras (GMAs).

In this paper we pursue both of these directions in the case of Klingen Eisenstein series of level one on the group $\Sp_4$. More precisely, let $k\geq 12$ be an even  integer and $\phi$ a classical weight $k$ Hecke eigenform of level $1$ (i.e., on the group $\GL_{2/\bfQ}$). Write $E_{\phi}^{2,1}$ for the (appropriately normalized) Klingen Eisenstein series on $\Sp_4$ induced from $\phi$. It is a Siegel modular form of weight $k$ and full level. Congruences between Klingen Eisenstein series and cusp forms have been studied previously by Kurokawa \cite{Kurokawa1,Kurokawa2}, Katsurada and Mizumoto \cite{KatsuradaMizumoto, MizumotoCongruences}, Takeda \cite{Takeda}, and Urban (unpublished).   Katsurada and Mizumoto obtain congruences as an application of the doubling method. In this paper, we produce congruences via a much shorter argument using results of Yamauchi \cite{Yamauchi}. The trade-off is that while our proof is much shorter, we obtain congruences only modulo a prime $\ell$ whereas Katsurada and Mizumoto obtain congruences modulo powers of $\ell$. However, the hypotheses required for our result are different and less restrictive than those needed in \cite{KatsuradaMizumoto}.  We show that under certain conditions $E_{\phi}^{2,1}$ is congruent to some cusp form $f$ of the same weight and level with irreducible Galois representation (Theorem \ref{cong 1}).  This is the first main result of the paper.  These congruences are governed by the numerator  of the (algebraic part) of the symmetric square $L$-function $L_{\rm alg}(2k-2, {\rm Sym}^2\phi)$ of $\phi$. We also exhibit a concrete example when the assumptions of Theorem \ref{cong 1} are satisfied (see Example \ref{ex 1}).

We then proceed to show that these congruences give rise (under some assumptions) to non-trivial elements in the Selmer group $H_{2-k}:=H^1_f(\bfQ, \ad \rho_{\phi}(2-k)\otimes \bfQ_{\ell}/\bfZ_{\ell})$. Here $\rho_{\phi}$ is the Galois representation attached to $\phi$ by  Deligne and we use the Fontaine-Laffaille condition at $\ell$. Assuming the Vandiver Conjecture for  $\ell$ we also deduce the non-triviality of the Selmer group $H^1_f(\bfQ, \ad^0 \rho_{\phi}(2-k)\otimes \bfQ_{\ell}/\bfZ_{\ell})$ (Corollary \ref{BK conj1} and Remark \ref{rem5.8}). This is our second main result and gives evidence for new cases of the Bloch-Kato conjecture. This conjecture was studied for other twists of $\ad \rho_\phi$ by \cite{DiamondFlachGuoAnnScEcole04} and \cite{KlosinAnnInstFourier2009}. In \cite{Urban2001} Urban assumed the existence of Klingen Eisenstein congruences to prove a result towards the main conjecture of Iwasawa theory for the adjoint $L$-function.

To properly analyze these Selmer groups we require some results on extensions of Fontaine-Laffaille modules whose proofs appear to be absent in the literature. In Section \ref{Fontaine-Laffaille} we carefully study certain aspects of Fontaine-Laffaille theory, in particular, prove the Hom-tensor adjunction formula and give a precise definition of Selmer groups with coefficient rings of  finite length.

Given the eigenvalue congruence $E^{2,1}_{\phi}\equiv f$ (mod $\ell$) we also study deformations of a {non-semi-simple}  Galois representation $\ov{\rho}: G_{\bfQ} \to \GL_4(\ov{\bfF}_{\ell})$ whose semi-simplification arises from the Klingen Eisenstein series. Such a representation is reducible with two 2-dimensional Jordan-Holder blocks and more precisely one has $$\ov{\rho} = \bmat \ov{\rho}_{\phi} & * \\ & \ov{\rho}_{\phi}(k-2)\emat. $$ Conjecturally such representations should arise as mod $\ell$  reductions of Galois representations attached to Siegel cusp forms  which are congruent to $E_{\phi}^{2,1}$ mod $\ell$. We assume that $\dim H_{2-k}[\ell]=1$, where $[\ell]$ indicates $\ell$-torsion. This can be seen as a  refinement of the uniqueness assumption of \cite{SkinnerWiles97} similar to the one in \cite{BergerKlosin13} and as in \cite{BergerKlosin13,Calegari06} we prove the principality of the reducibility ideal of the universal deformation. However, this principality cannot be achieved through the method of \cite{BergerKlosin13} because the representation in question fails to satisfy the strong self-duality property required for the method of [loc.cit.]. Instead we improve on a recent result of Akers \cite{Akers24} which replaces the self-duality condition with a one-dimensionality assumption on the Selmer group $H_{k-2}:=H^1_f(\bfQ, \ad \ov{\rho}_{\phi}(k-2))$ of the `opposite' Tate twist of $\ad \rho_{\phi}$. With these assumptions in place we are able to show that the universal deformation ring $R$ is a discrete valuation ring and prove a modularity result guaranteeing that the unique deformation of $\ov{\rho}$ indeed arises from a Siegel cusp form congruent to $E_{\phi}^{2,1}$ (Theorem \ref{main}). This is the third main result of the paper.

We then proceed to formulate conditions for non-cyclicity of the Selmer group $H_{k-2}$. While many results in the literature give bounds on the orders of Selmer groups (in particular, Corollary \ref{BK conj1} gives such a lower bound on $H_{2-k}$), the structure of these groups is notoriously mysterious. In this paper we prove  that if  the  (local) Klingen Eisenstein ideal $J_{\fm}$ is not principal then $H_{k-2}$ is not cyclic (Corollary \ref{not cyclic}). We further refine this result by providing a criterion for non-principality in terms of  the depth of congruences between cusp forms and $E^{2,1}_{\phi}$ (Corollary \ref{not cyclic 2}).  An intriguing feature of these results is that $H_{k-2}$ is non-critical, i.e. this Selmer group is not controlled by a critical $L$-value in the sense of Deligne.

The authors would like to thank Jeremy Booher and Neil Dummigan for helpful discussions.

\section{Background and notation}\label{sec:intro}
Given a field $F$ we denote by $G_F$ its absolute Galois group. Fix a rational prime $\ell>2$.
If $M$ is a topological  $\bfZ_\ell[G_{F}]$-module we will write $M(n) = M \otimes \epsilon^{n}$ for the $n$-th Tate twist where $\epsilon$ denotes the $\ell$-adic cyclotomic character.


For each prime $p$, we fix an embedding $\ov{\bfQ} \hookrightarrow \ov{\bfQ}_{p}$.
This is equivalent to choosing a prime $\ov{p}$ of $\ov{\bfQ}$ lying over $p$ and fixes an isomorphism $D_p \cong G_{\bfQ_p}$, where $D_p$ is the decomposition group of $\ov{p}$. We will denote by $I_p\subset D_p$ the corresponding inertia group. We also fix an isomorphism $\ov{\bfQ}_{\ell} \cong \bfC$.

Let $E$ denote a finite extension of $\bfQ_{\ell}$ with valuation ring $\Oo$, uniformizer $\lambda$ and residue field $\bfF$.
For a continuous homomorphism $\rho:G_F \to \GL_n(\Oo)$ we write $\ov{\rho}: G_F\to \GL_n(\bfF)$ for the mod $\lambda$ reduction of $\rho$.

For $n\in \bfZ_+$, we denote by $\Mat_n$ (resp. $\GL_n$) the affine group scheme over $\bfZ$ of $n\times n$ (resp. invertible) matrices.
Given a matrix $\gamma \in \Mat_{2n}$, we will write it as
$\gamma = \bmat a_{\gamma} & b_{\gamma}\\ c_{\gamma} & d_{\gamma} \emat$ where the blocks are in $\Mat_{n}$.
Set
$\GSp_{2n} = \left\{g \in \GL_{2n} : \, ^t\!g J_{n} g = \mu_{n}(g) J_{n} , \mu_{n}(g) \in \GL_1\right\},$
where $J_n=\bmat 0_n & -1_n \\ 1_n & 0_n \emat$ where $1_{n}$ is the $n$ by $n$ identity matrix, 
and $\mu_{n}: \GL_{2n} \rightarrow \GL_{1}$ is the
homomorphism defined via the equation given in the definition.
Write $\GSp^{+}_{2n}(\bfR)$ for the subgroup of $\GSp_{2n}(\bfR)$ consisting of elements $g$ with $\mu_{n}(g) > 0$. We set $\Sp_{2n} = \ker (\mu_{n})$ and
\begin{equation*}
\Gamma_n = \Sp_{2n}(\bfZ) = \left\{g \in \GL_{2n}(\bfZ): \, ^t\!g J_{n} g = J_{n} \right\}.
\end{equation*}
Note that $\Sp_2 = \SL_2$,  the subgroup scheme of $\GL_2$ of matrices of determinant one. 

The Siegel upper half-space is given by
\begin{equation*}
\fh_n =  \{z=x+iy \in \Mat_n(\bfC):  x,y\in \Mat_n(\bfR), \, ^t\!z=z, y>0\}
\end{equation*}
where we write $y>0$ to indicate that $y$ is positive definite.
The group $\GSp_{2n}^{+}(\bfR)$ acts on $\fh_{n}$ via
$\gamma z = (a_{\gamma}z + b_{\gamma})(c_{\gamma} z + d_{\gamma})^{-1}$.

For a function  $f: \fh_{n} \rightarrow \bfC$ set
$(f|_{\k} \gamma)(z) = \mu_{n}(\gamma)^{nk/2} j(\gamma,z)^{-k} f(\gamma z)$
for $\gamma \in \GSp_{2n}^{+}(\bfR)$ and $z \in \fh_{n}$ where $j(\gamma,z) = \det(c_{\gamma} z+ d_{\gamma})$.
A Siegel modular form of weight $k$ and level $\Gamma_{n}$ is a holomorphic function $f: \fh_{n} \rightarrow \bfC$ satisfying
$(f|_{k}\gamma)(z) = f(z)$
for all $\gamma \in \Gamma_{n}$.  If $n=1$, we also require the standard growth condition at the cusp.  We denote the
$\bfC$-vector space of Siegel modular forms of weight $k$ and level $\Gamma_n$ as $M_{k}(\Gamma_n)$.   Any $f \in M_{k}(\Gamma_{n})$ has a Fourier expansion of the form
\begin{equation*}
f(z)=\sum_{T\in\Lambda_n} a(T;f) e(\Tr(Tz))
\end{equation*}
where $\Lambda_n$ is defined to be the set of $n$ by $n$ half-integral (diagonal entries are in $\bfZ$,
off diagonal are allowed to lie in $\frac{1}{2}\bfZ$) positive semi-definite symmetric matrices and $e(w):=e^{2\pi i w}$.  Given a ring $A \subset \bfC$,
we write $f \in M_{k}(\Gamma_{n};A)$ if $a(T;f) \in A$ for all $T \in \Lambda_{n}$. Define the subspace $S_k(\Gamma_n)=\ker \Phi\subset M_k(\Gamma_n)$ of \emph{cusp forms}, where
$\Phi(f)(z) = \lim_{t \rightarrow \infty} f\left(\bmat z & 0 \\ 0 & it \emat \right).$

We will now introduce certain Eisenstein series, which will play a prominent role in this paper.  For $n \geq 1$ and $0 \leq r \leq n$ define the parabolic subgroup
\begin{equation*}
P_{n,r} = \left\{\bmat a_1 & 0 & b_1 & * \\ * & u & * & * \\ c_1 & 0 & d_1 & * \\ 0 & 0 & 0 & ^t\!u^{-1} \emat \in \Gamma_{n}: \bmat a_1 & b_1 \\ c_1 & d_1 \emat \in \Gamma_{r}, u \in \GL_{n-r}(\bfZ) \right\}.
\end{equation*}
We define projections
$\star: \fh_{n} \to \fh_{r}$, $z = \bmat z^{\star} & * \\ * &* \emat \mapsto z^{\star}$`
and $\star: P_{n,r} \to\Gamma_{r}$, $\gamma \mapsto \gamma^{\star} = \bmat a_1 & b_1 \\ c_1 & d_1 \emat.$

 Let $\phi \in S_{k}(\Gamma_{1})$.  The Klingen Eisenstein series attached to $\phi$ is the series
 \begin{equation*}
E^{2,1}_{\phi}(z) = \sum_{\gamma \in P_{2,1} \backslash \Gamma_{2}} \phi((\gamma z)^{\star}) j(\gamma, z)^{-k}
\end{equation*}
where $z \in \fh_{2}$. The Eisenstein series converges for $k \geq 12$, see \cite{klingen} Theorem 1 page 67 for example.
Note that \cite{klingen} Proposition 5 page 68 gives $\Phi(E_{\phi}^{2,1}) = \phi$.

Given two Siegel modular forms $f_1, f_2 \in M_{k}(\Gamma_n)$ with at least one a cusp form, set
\begin{equation*}
\langle f_1, f_2 \rangle = \int_{\Gamma_{n}\backslash\fh_n}f_1(z)\overline{f_2(z)}(\det y)^{k} d\mu z,
\end{equation*}
where $z=x+iy$ with $x = (x_{\alpha, \beta})$, $y = (y_{\alpha, \beta}) \in \Mat_{n}(\bfR)$,
$d\mu z = (\det y)^{-(n+1)} \prod_{\alpha \leq \beta} d x_{\alpha,\beta} \prod_{\alpha \leq \beta} dy_{\alpha,\beta}$ with $dx_{\alpha,\beta}$ and $dy_{\alpha,\beta}$ the usual Lebesgue measure on $\bfR$.

Given $\gamma \in \GSp_{2n}^{+}(\bfQ)$, we write $T(\gamma)$ to denote
 the double coset $\Gamma_{n} \gamma \Gamma_{n}$ and set
$T(\gamma) f = \sum_{i} f|_{k} \gamma_{i}$
where the $\gamma_i$ are given by the finite decomposition $\Gamma_{n} \gamma \Gamma_{n} = \coprod_{i} \Gamma_{n} \gamma_{i}$ and $f \in M_{k}(\Gamma_n)$.  Let $m >1$.  We define $T^{(n)}(m)$ via
\begin{equation*}
    T^{(n)}(m) = \sum_{\substack{d_1e_1 = \cdots = e_n d_n = m\\ d_1 \mid d_2 \mid \cdots \mid d_n \mid e_n \mid e_{n-1} \mid \cdots \mid e_1}} T(\diag(d_1, \dots, d_n, e_1, \dots, e_n)).
\end{equation*}
In particular, for $p$ a prime we have
\begin{equation*}
    T^{(n)}(p) = T(\diag(1_{n}, p1_{n})).
\end{equation*}
We also define
\begin{equation*}
T_{i}^{(n)}(p^2)  = T(\diag(1_{n-i}, p1_{i}, p^2 1_{n-i}, p1_{i})), \quad 1\leq i\leq n.
\end{equation*}
The spaces $M_{k}(\Gamma_{n})$ and $S_{k}(\Gamma_{n})$ are both stable under the action of $T^{(n)}(p)$ and $T_{i}^{(n)}(p^2)$ for $1 \leq i \leq n$ and all $p$. We say a nonzero $f \in M_{k}(\Gamma_{n})$ is an eigenform if it is an eigenvector of $T^{(n)}(p)$ and $T_{i}^{(n)}(p^2)$ for all $p$ and all $1 \leq i \leq n$.  As we will be focused on the case $n=2$, we specialize to that case. We let $\bfT'$ denote the $\bfZ$-subalgebra of $\End_{\bfC}(S_{k}(\Gamma_{2}))$ generated by the Hecke
operators $T^{(2)}(p)$ and $T^{(2)}_1(p^2)$ for all primes $p$.


Recall that $E/\bfQ_{\ell}$ denotes a finite extension with valuation ring $\mO$ 
and uniformizer $\lambda$. Given eigenforms $f_1, f_2 \in M_{k}(\Gamma_n;\mO)$, following the notation in \cite{Yamauchi}
we write $f_1 \equiv_{\ev} f_2 \pmod{\lambda}$ if $\lambda_{f_1}(T) \equiv \lambda_{f_2}(T) \pmod{\lambda}$ for all $T \in \bfT'$ where $Tf_{i} = \lambda_{f_i}(T)f_i$.

For an eigenform $\phi\in S_k(\Gamma_1)$ we set
\begin{equation*}
    \begin{split} L(s,\phi) := &\prod_p(1-\lambda_{\phi}(p)p^{-s}+p^{k-1-2s})^{-1},\\
    L(s, \Symm \phi)=&\prod_p \left[(1-\alpha_p^2 p^{-s})(1-\alpha_p \beta_p p^{-s}) (1-\beta_p^2 p^{-s}) \right]^{-1}\end{split}
\end{equation*}
where  $\lambda_{\phi}(p)$ is the eigenvalue of $T(p):=T^{(1)}(p)$ corresponding to $\phi$ and $\alpha_p, \beta_p$ denote the roots of $X^2-\lambda_{\phi}(p)X+p^{k-1}$. 
The symmetric square $L$-function converges in the right half-plane $\Re(s) > k$, satisfies a functional equation, and has analytic continuation to the entire complex plane.

For an eigenform $f\in S_k(\Gamma_2)$ we define
\begin{align*}
    L_{p}(X,f,\spin) = &(1-\lambda_{f}(p)X + (\lambda_f(p)^2 - \lambda_f(p^2) - p^{2k-4})X^2\\
    &-\lambda_{f}(p) p^{2k-3}X^3 + p^{4k-6}X^4)
\end{align*}
where we write $\lambda_f(p)$ is the eigenvalue of $T^{(2)}(p)$ corresponding to $f$ and $\lambda_{f}(p^2)$ for the eigenvalue $T^{(2)}(p^2)$ corresponding to $f$.




\begin{theorem}[\cite{WeissauerAsterique05} Theorem 1] \label{Weissauer} Let $f \in S_{k}(\Gamma_2)$ be an eigenform.
  For a sufficiently large finite extension $F/\bfQ_{\ell}$ one has $L_{p}(X,f, \spin) \in F[X]$ for all primes $p\neq \ell$ and there is a semisimple continuous representation 
    $\rho_{f}: G_{\bfQ} \rightarrow \GL_4(F)$
which is unramified outside of $\ell$ so that for $p \neq \ell$ one has
    $L_p(X,f;\spin) = \det(1 - \rho_{f}(\Frob_p)X)$.
\end{theorem}

\section{Congruence} \label{s3}

We keep the notation of Section \ref{sec:intro}. Throughout this section we fix an even weight $k \geq 12$ and an odd prime $\ell$ and make the following assumption.
\begin{assumption} \label{admis}
    Given an even weight $k \geq 12$ and prime $\ell$,  assume that $E/\bfQ_\ell$ is sufficiently large to contain the fields $F$ from Theorem \ref{Weissauer} for all forms $f \in S_k(\Gamma_2)$. We also assume that for every eigenform $\phi\in S_k(\Gamma_1)$ the field $E$ contains all the Hecke  eigenvalues of  $\phi$ as well as the value  $L_{\rm alg}(2k-2, \Symm\phi)$ (see \eqref{symalg} for the definition). In addition we suppose that $E$ contains a primitive cube root of unity.
\end{assumption}


Recall that we denote the valuation ring of $E$ by $\mO$.
Let $\phi \in S_{k}(\Gamma_1)$ be a normalized eigenform
and consider the Klingen Eisenstein series $E_{\phi}^{2,1}$.  In this section we show under certain conditions that $E_{\phi}^{2,1}$ is eigenvalue-congruent to a cuspidal Siegel modular form with irreducible Galois representation.

Write
\begin{equation*}
    E^{2,1}_{\phi}(z) = \sum_{T \in \Lambda_2} a(T;E^{2,1}_{\phi}) e(\Tr(Tz)).
\end{equation*}
For $T$ that are singular, i.e., $\det T = 0$,  one has $T$ is unimodularly equivalent to $\bmat n & 0 \\ 0 & 0 \emat$ for some $n \in \bfZ_{\geq 0}$. For such $T$, one has $a(T;E^{2,1}_{\phi}) = a(n;\phi)$ where $\phi(z) = \sum_{n > 0} a(n;\phi) e(nz)$.

We use the following result to prove our congruence.

\begin{corollary}[\cite{Yamauchi} Corollary 2.3]\label{corl:yama} Assume $\ell \geq 7$.  Let $g$ be a Hecke eigenform in $M_{k}(\Gamma_2;\mO)$ with Fourier expansion $g(z) = \sum_{T \in \Lambda_2} a(T;g)e(\Tr(Tz))$.
Assume that $\lambda \mid a(T;g)$ for all $T$ with $\det T =0$ and that there exists at least one $T > 0$ with $a(T;g) \in \mO^{\times}$.  Then there exists a 
Hecke eigenform $f \in S_{k}(\Gamma_2;\mO)$ so that $g \equiv_{\ev} f \not \equiv_{\ev} 0 \pmod{\lambda} $.
\end{corollary}

For $T = \bmat m & r/2 \\ r/2 & n \emat$, we say $T$ is primitive if $\gcd(m,n,r) = 1$. We set $\det(2T) = \Delta(T) \ff^2$ for a positive integer $\ff$ and where $-\Delta(T)$ is the discriminant of the quadratic field $\bfQ(\sqrt{-\det(2T)})$.  We set $\chi_{T} = \left(\frac{-\Delta(T)}{\cdot}\right)$, the quadratic character associated to the field $\bfQ(\sqrt{-\det(2T)})$.

Define
$
    \vartheta_{T}(z) = \sum_{a,b \in \bfZ^2} e(z(ma^2 + r ab + nb^2))
    = \sum_{n \geq 0} b(n;\vartheta_{T}) e(nz).
$
Given $v \in \bfZ_{\geq 1}$, set
\begin{equation*}
    \vartheta_{T}^{(v)}(z) = \sum_{n \geq 0} b(v^2 n;\vartheta_{T}) e(nz).
\end{equation*}
One can check
that $\vartheta_{T}^{(v)} \in M_1(\Gamma(4\det T))$ where $\Gamma(N) = \ker\left(\SL_2(\bfZ) \rightarrow \SL_2(\bfZ/N\bfZ)\right)$ and $M_k(\Gamma(N))$ denotes the modular forms of weight $k$ and level $\Gamma(N)$. Set
\begin{equation*}
    D(s,\phi,\vartheta_{T}^{(v)}) = \sum_{n \geq 1} a(n;\phi)b(v^2 n; \vartheta_{T}) n^{-s}.
\end{equation*}
We have that $D(s,\phi,\vartheta_{T}^{(v)})$ converges in a right half-plane with meromorphic continuation to the entire complex plane (\cite{ShimuraCommPureAppliedMath1976}).
Set
\begin{equation} \label{symalg}
    L_{\alg}(2k-2,\Symm \phi) := \frac{L(2k-2,\Symm \phi)}{\pi^{3k-3} \langle \phi, \phi \rangle},
\end{equation}
\begin{equation*}
    L_{\alg}(k-1,\chi_{T}) = \frac{\Delta(T)^{k-3/2} L(k-1,\chi_{T})}{\pi^{k-1}},
\end{equation*}
and
\begin{equation*}
    D_{\alg}(k-1,\phi,\vartheta_{T}^{(v)}) = \frac{D(k-1,\phi,\vartheta_{T}^{(v)})}{\pi^{k-1} \langle \phi, \phi \rangle}.
\end{equation*}
We have each of these terms is algebraic, see  (\cite{ShimuraCommPureAppliedMath1976}, \cite{SturmAJM1980}, \cite{ZagierSpringer77}). 
Moreover, we have via \cite{ZagierSpringer77} Equation (22) that if $\ell > k-1$, then $L_{\alg}(k-1,\chi_{T})$ is $\ell$-integral.

\begin{theorem}\cite{MizumotoKodai84}  Let $\phi \in S_{k}(\Gamma_1)$ be a normalized eigenform with a Fourier expansion as above. 
Let $T> 0$ be primitive. 
We have
\begin{align*}
    a(T;E^{2,1}_{\phi}) = &(-1)^{k/2} \frac{(k-1)!}{(2k-2)!} 2^{k-1} \frac{L_{\alg}(k-1,\chi_T)}{L_{\alg}(2k-2,\Symm \phi)} \\
    &\cdot \sum_{\substack{m \mid \ff\\ m > 0}} M_{T}(\ff m^{-1}) \sum_{\substack{t \mid m \\ t > 0}} \mu(t) D_{\alg}(k-1, \phi, \vartheta_{T}^{(m/t)})
\end{align*}
where
\begin{equation*}
    M_{T}(a) = \sum_{\substack{d \mid a \\ d >0}} \mu(d) \chi_{T}(d) d^{k-2} \sigma_{2k-3}(a d^{-1})
\text{ and }
    \sigma_{s}(d) = \sum_{\substack{g \mid d \\ g > 0}} g^{s}.
\end{equation*}
\end{theorem}

Note that while this theorem is only stated for Fourier coefficients indexed by primitive $T$, we have that Fourier coefficients indexed by non-primitive $T$ are an integral linear combination of Fourier coefficients indexed by primitive $T$ by \cite{MizumotoKodai84} Equation 1.3, so we only need to consider the primitive $T$ to guarantee the hypotheses of Corollary \ref{corl:yama} are satisfied.

\begin{lemma}\label{lem:galirred} Assume $\ell > 4k-7$. Let $f \in S_k(\Gamma_2;\mO)$ be an eigenform. If there exists a normalized eigenform $\phi \in S_k(\Gamma_1;\mO)$ so that  $f \equiv_{\ev} E_{\phi}^{2,1} \pmod{\lambda}$ and that $\ov{\rho}_\phi$ is irreducible, then $\rho_f$ is irreducible.
\end{lemma}

\begin{proof} We know via \cite{WeissauerAsterique05} that if $\rho_{f}$ is reducible, then the automorphic representation associated to $f$ is either CAP or a weak endoscopic lift.  Moreover, by \cite{PitaleSchmidtRamanujan} Corollary 4.5  since $f \in S_{k}(\Gamma_2)$ and $k >2$, the automorphic representation attached to $f$ can be CAP only with respect to the Siegel parabolic, i.e., $f$ is a classical Saito-Kurokawa lift. Suppose that $f$ is a Saito-Kurokawa lift of $\psi \in S_{2k-2}(\Gamma_1)$. Then we have $\ov{\rho}_{f}^{\semi} = \ov{\rho}_{\psi} \oplus \ov{\epsilon}^{k-1} \oplus \ov{\epsilon}^{k-2}$.  
Using the fact that $f \equiv_{\ev} E_{\phi}^{2,1} \pmod{\lambda}$ and that the eigenvalues of $E_{\phi}^{2,1}$ are given by $\lambda(p;E_{\phi}^{2,1}) = a(p;\phi) + p^{k-2}a(p;\phi)$,
the Brauer-Nesbitt  and Chebotarev Theorems give that $\ov{\rho}_{f}^{\semi} = \ov{\rho}_{\phi} \oplus \ov{\rho}_{\phi}(k-2)$, where recall that we write $\ov{\rho}_{\phi}(k-2)$ for $\ov{\rho}_{\phi} \otimes \ov{\epsilon}^{k-2}$.  This is a contradiction if $\ov{\rho}_{\phi}$ is irreducible. Thus, $f$ cannot be a Saito-Kurokawa lift.  It remains to show that the automorphic representation associated to $f$ is not a weak endoscopic lift.  The possible decompositions of $\rho_{f}$ are given in \cite{SkinnerUrbanJussieu06} Theorem 3.2.1 under the assumption that $\ell > 4k-7$. Of these, the only case remaining to check is Case B(v), which states if $\rho_{f} = \sigma \oplus \sigma'$ with $\sigma$ and $\sigma'$ both 2-dimensional, then $\det(\sigma) = \det(\sigma')$.  In our case, this would require $\det(\rho_{\phi}) = \det(\rho_{\phi}(k-2))$, i.e., $\ov{\epsilon}^{k-1} = \ov{\epsilon}^{2k-3}$, which is impossible by our assumption that $\ell>4k-7$. Thus, $\rho_{f}$ is irreducible.
\end{proof}

\begin{theorem} \label{cong 1} Assume that $\ell > 4k-7$. Let $\phi \in S_{k}(\Gamma_1; \mO)$ be a normalized eigenform.
Suppose that
$\lambda \mid L_{\alg}(2k-2, \Symm \phi)$.  Furthermore, assume there exists $T_0 > 0$ so that \begin{equation*}
    \val_{\lambda}\left(L_{\alg}(2k-2, \Symm \phi)a(T_0,E^{2,1}_{\phi})\right) \leq 0.
\end{equation*}
Then there exists an eigenform $f \in S_{k}(\Gamma_2; \mO)$ so that
\begin{equation*}
    E^{2,1}_{\phi} \equiv_{\ev} f \pmod{\lambda}.
\end{equation*}
If in addition $\ov{\rho}_{\phi}$ is irreducible, then $\rho_{f}$ is irreducible.
\end{theorem}

\begin{proof}  Set $H_{\phi}^{2,1}(z) = L_{\alg}(2k-2,\Symm \phi) E^{2,1}_{\phi}(z)$.
For $T \geq 0$, define
$c(T) = \val_{\lambda}(a(T;H_{\phi}^{2,1})).
$
Let $c = \min_{T \geq 0} c(T)$.  Since $H_{\phi}^{2,1} \in M_k(\Gamma_2)$, the Fourier coefficients $a(T;H_{\phi}^{2,1})$ have bounded denominators so $c$ is well-defined (\cite{ShimuraFC}). Moreover, our assumption that there is a $T_0> 0$ with $\val_{\lambda}(a(T_0;H_{\phi}^{2,1})) = \val_{\lambda}\left(L_{\alg}(2k-2, \Symm \phi)a(T_0,E^{2,1}_{\phi})\right) \leq 0$ gives that $c \leq 0$.
Set
\begin{equation*}
    G^{2,1}_{\phi}(z) = \lambda^{-c} H_{\phi}^{2,1}(z).
\end{equation*}
We have $a(T;G_{\phi}^{2,1}) \in \mO$ for all $T \geq 0$ since $c(T) - c \geq 0$ for all $T \geq 0$.  Observe that for $T$ with $\det T=0$, we have $a(T;G^{2,1}_{\phi}) = \lambda^{-c}L_{\alg}(2k-2,\Symm \phi) a(n;\phi)$ for some $n \in \bfZ_{\geq 0}$.  Since $a(n;\phi) \in \mO$ by assumption and $-c\geq 0$, this gives $\lambda \mid a(T;G^{2,1}_{\phi})$ for all $T$ with $\det T = 0$, i.e., all the Fourier coefficients indexed by singular $T$ vanish modulo $\lambda$. Moreover, since $c = c(\widetilde{T})$ for some $\widetilde{T}$, we have $a(\widetilde{T};G_{\phi}^{2,1}) \in \mO^{\times}$ for some $\widetilde{T}$.  Since $c\leq 0$ and $\lambda \mid a(T;G_{\phi}^{2,1})$ for all singular $T$, we have $\widetilde{T}>0$.
 Thus, Corollary \ref{corl:yama} and the fact that $G^{2,1}_{\phi}$ and $E^{2,1}_{\phi}$ have the same eigenvalues gives an eigenform $f \in S_{k}(\Gamma_2;\mO)$ so that
$
    E^{2,1}_{\phi} \equiv_{\ev} f \not \equiv 0\pmod{\lambda}.
$
By Lemma \ref{lem:galirred} we get that $\rho_{f}$ is irreducible.
\end{proof}

\begin{example} \label{ex 1} Consider the space $M_{26}(\Gamma_2)$.  This space has dimension seven and is spanned by $E^{2,0}$ (Siegel Eisenstein series), $E^{2,1}_{\phi}$ (Klingen Eisenstein series), three Saito-Kurokawa lifts, and two non-lift forms $\Upsilon_1$ and $\Upsilon_2$ where here $\phi \in S_{26}(\Gamma_1)$ is the unique newform given by
\begin{equation*}
    \phi(z) = e(z) - 48e(2z) -195804 e(3z) + \cdots.
\end{equation*}
We have via \cite{DummiganSymmSquare} that
\begin{align*}
    L_{\alg}&(50, \Symm \phi)\\ &= \frac{2^{41} \cdot 163\cdot 187273}{3^{26} \cdot 5^{10}\cdot 7^7 \cdot 11^4 \cdot 13^2 \cdot 17^2 \cdot 19 \cdot 23^2 \cdot 29 \cdot 31 \cdot 37 \cdot 41 \cdot 43 \cdot 47 \cdot 657931}
\end{align*}
We consider $\ell \in \{163, 187273\}$ and show that both primes produce an example for Theorem \ref{cong 1}.

The Klingen Eisenstein series associated to $\phi$ is given in the beta version of LMFDB.  By considering the Fourier coefficients indexed by $\bmat 1 & 0 \\ 0 & 0 \emat$ and $\bmat 2 & 0 \\ 0 & 0 \emat$ one can see that the Klingen Eisenstein series given there, say $E^{\LMFDB}_{\phi}$, is given by
\begin{equation*}
    E^{2,1}_{\phi}(z) = -\frac{E^{\LMFDB}_{\phi}(z)}{2^6 \cdot 3^3 \cdot 11 \cdot 19 \cdot 163 \cdot 187273}.
\end{equation*}
We have from LMFDB that
\begin{equation*}
    a\left(\bmat 1 & 1/2 \\ 1/2 & 1 \emat ; E^{2,1}_{\phi}\right) = \frac{2^2\cdot 5 \cdot 43}{ 11 \cdot 19 \cdot 163 \cdot 187273}
\end{equation*}
Consider $G^{2,1}_{\phi}(z) = L_{\alg}(50,\Symm \phi) E^{2,1}_{\phi}(z)$. We have for $\ell$ as above that $\ell \mid a(T;G^{2,1}_{\phi})$ for all $T$ with $\det T= 0$ and $a\left(\bmat 1 & 1/2 \\ 1/2 & 1 \emat ; G^{2,1}_{\phi}\right) \not \equiv 0 \pmod{\ell}$.
Thus by Theorem \ref{cong 1} there exists a non-trivial Hecke eigenform $f \in S_{k}(\Gamma_2; \bfZ_{\ell})$ with $E^{2,1}_{\phi} \equiv_{\ev} f \pmod{\ell}$.

Consider first the prime $\ell = 163$ and suppose that $\ov{\rho}^{\rm ss}_{\phi,163}=\psi_1 \oplus \psi_2$ for some characters $\psi_1, \psi_2$.  Since  $\ov{\rho}_\phi$ is unramified for all $p \neq \ell$ we see that $\psi_1$ and $\psi_2$ are each an integer power of $\ov{\epsilon}$ (see the proof of Lemma \ref{Lemma 5.3}). As $163 \nmid a(163;\phi)$  we know $\phi$ is ordinary at $163$ and we get $\ov{\rho}^{\rm ss}_{\phi,163}=\ov{\epsilon}^{25} \oplus 1$. By \cite{RibetInvent76} Proposition 2.1 we can find a lattice such that
\begin{equation*}
     \ov{\rho}_{\phi,163} = \bmat 1 & * \\ 0 & \ov{\epsilon}^{25}\emat \not \cong 1 \oplus \ov{\epsilon}^{25}.
 \end{equation*}

One can use ordinarity of $\phi$ to show that  $*$ gives an unramified $163$-extension of $\bfQ(\zeta_{163})$ (see e.g. the proof of Theorem 4.28 in \cite{BergerKlosin23}). 
By Herbrand's Theorem this implies that $163 \mid B_{26}$.  However, one can check this is not true, so we must have that $\ov{\rho}_{\phi,163}$ is irreducible and so $E^{2,1}_{\phi}$ must be congruent (modulo 163) to a cusp form $f$ that is not a Saito-Kurokawa lift, i.e. $\rho_f$ is irreducible by Theorem \ref{cong 1}.  One uses LMFDB to check that $f= \Upsilon_2 $.



Now consider the case that $\ell = 187273$.  In this case it is less practical to calculate $a(187273; \phi)$, so we directly eliminate the possibility that  $E^{2,1}_{\phi}$ is congruent to a Saito-Kurokawa lift modulo $187273$.  The space to consider is $S_{50}(\Gamma_1)$.  This space has one Galois conjugacy class of newforms consisting of three newforms, call them $\psi_1, \psi_2$, and $\psi_3$.  Each newform has a field of definition $K_{\psi_{i}}$ generated by a root $\alpha_{i}$ of
\begin{equation*}
    c(x) = x^3 + 24225168x^2 - 566746931810304x -13634883228742736412672.
\end{equation*}
One has that $\lambda(2, E^{2,1}_{\phi}) = -805306416$ and that $\lambda(2,\psi_{i}) = 2^{49}+2^{48} + \alpha_{i}$. One uses SAGE to check that $\lambda(2,E^{2,1}_{\phi}) \not \equiv \lambda(2,\psi_{i}) \pmod{187273}$, so $E^{2,1}_{\phi}$ must be congruent to a cusp form that is not a Saito-Kurokawa lift.  One uses LMFDB to see that $E^{2,1}_{\phi}\equiv_{\ev} \Upsilon_1 \pmod{187273}$.
\end{example}


\section{Extensions of  Fontaine-Laffaille modules} \label{Fontaine-Laffaille}

In this section we gather  various facts (in particular Proposition \ref{Homtensor} and Proposition \ref{lem4.18}) about extensions of Fontaine-Laffaille modules, which we use in this article but which to the best of our knowledge have not been published elsewhere.

\subsection{Definitions} \label{s4.1}
We keep our assumption that $\ell$ is an odd prime. 
We fix integers $a,b$ such that $0 \leq b-a \leq \ell -2$.  In this section let $E$ be an arbitrary finite extension of $\bfQ_{\ell}$ with ring of integers $\Oo$, uniformizer $\lambda$ and residue field $\bfF$. Write ${\rm LCA}_{\Oo}$ (respectively ${\rm LCN}_{\Oo}$) for the category of local complete Artinian (respectively Noetherian) $\Oo$-algebras with residue field $\bfF$.
For a category $\mC$ we will write $X \in \mC$ to mean that $X$ is an object of $\mC$.
\begin{definition}[\cite{Kalloniatis19} Definition 2.3/\cite{Booher19} Definition 4.1] \label{defn2.1}
{~}
\begin{enumerate}
\item A Fontaine-Laffaille module is a finitely generated $\bfZ_\ell$-module $M$ together with a decreasing filtration by $\bfZ_\ell$-module direct summands $M^i$ for $i \in \bfZ$ such that  there exists $k \leq l$ with $M^i=M$ for $i \leq k$ and $M^{i+1}=0$ for $i \geq l$, and a collection of $\bfZ_\ell$-linear maps $\phi^i_M: M^i \to M$ such that $\phi^i_M|_{M^{i+1}}=\ell \phi^{i+1}_M$ for all $i$ and $M=\sum_i \phi^i_M(M^i)$.  The  category of all Fontaine-Laffaille modules 
is denoted $MF^{f}_{\bfZ_\ell}$.  Morphisms in this category are $\bfZ_\ell$-linear maps $f: M \to N$ satisfying $f(M^i) \subset N^i$ and $f \circ \phi^i_M=\phi^i_N \circ f|_{M^i}$
for all $i$. We will write $MF^{f}_{\tor, \bfZ_\ell}$ for  the full subcategory whose objects are of finite length as a $\bfZ_\ell$-modules.

    \item
 For a fixed interval $[k,l]$ we denote the full subcategory of $MF^{f}_{?,\bfZ_\ell}$ whose objects $M$ have a filtration  satisfying $M^k=M$ and $M^{l+1}=0$ by $MF^{f, [k,l]}_{?,\bfZ_\ell}$ for $?\in \{\emptyset, \textup{tor}\}$.


    \item For any $A \in {\rm LCA}_\Oo$, a Fontaine-Laffaille module over $A$ consists of an object $M \in MF^{f, [a,b]}_{\tor, \bfZ_\ell}$ together with a map $\theta: A \to {\rm End}_{MF^{f, [a,b]}_{\tor, \bfZ_\ell}}(M)$ that makes $M$ into a free finitely generated module over $A$
 in such a way that
 $M^i$ is an $A$-direct summand of $M$ for each $i$.
 A morphism between two such objects is required to additionally preserve the $A$-structure. We will denote this category of Fontaine-Laffaille modules over $A$ as $MF^{f, [a,b]}_{\tor, \bfZ_\ell} \otimes_{\bfZ_\ell} A$.
\item For $M \in MF^{f, [a,b]}_{\tor, \bfZ_\ell} \otimes_{\bfZ_\ell} A$ any integer $i$ for which $M^i/M^{i+1} \neq 0$ is called a Fontaine-Laffaille weight for $M$.
  The set of Fontaine-Laffaille weights for $M$ will be denoted by ${\rm FL}(M)$.

 \end{enumerate}
 \end{definition}

\begin{rem}
    We impose the stronger restriction on the length of the filtration as in \cite{BlochKato} Section 4 and \cite{ClozelHarrisTaylor08} Section 2.4.1, compared to that in  Section 1.1.2
of \cite{DiamondFlachGuoAnnScEcole04} or \cite{Kalloniatis19} Definition 2.3 (which allow the length to be $\ell-1$).
\end{rem}


\begin{definition} \label{def4.3} We introduce the following examples of Fontaine-Laffaille modules:
\begin{enumerate}[(i)]
    \item If $0 \in [a,b]$ we write $\textbf{1} \in MF^{f, [a,b]}_{\bfZ_\ell}$ for the Fontaine-Laffaille module defined by $\textbf{1}^i=\bfZ_\ell$ for $i \leq 0$ and $\textbf{1}^i=0$ for $i >0$. We set $\phi^i: \textbf{1}^i \to \textbf{1}$ to be given by $x \mapsto \ell^{-i} x$ for $i \leq 0$.

\item For any $A \in {\rm LCA}_\Oo$ we define $M_{n, A} \in MF^{f, [a,b]}_{\tor, \bfZ_\ell} \otimes_{\bfZ_\ell} A$ to be the free rank one $A$-module equipped with the  filtration $M_{n, A}^i=A$ for $i \leq n$, $M_{n, A}^{n+1}=0$ and $\phi^i: M_{n, A}^i \to M_{n, A}$ given by $x \mapsto \ell^{n-i} x$ for $i \leq n$. We put $\mathbf{1}_A=M_{0, A}$.
\end{enumerate}
\end{definition}

\begin{definition}[\cite{Booher19} Definition 4.9] \label{defn2.5}
    For $M \in  MF^{f, [a,b]}_{\tor, \bfZ_\ell}$ and $s \in \bfZ$ define $M(s)$ to be the same underlying $\bfZ_{\ell}$-module, but change the filtration to $M(s)^i=M^{i-s}$ for any $i \in \bfZ$. This means that $M(s) \in MF^{f, [a+s,b+s]}_{\tor, \bfZ_\ell}$.
\end{definition}

\subsection{Extensions}

To ease notation  in the rest of this section we put $\mathcal{C}_A^I=MF_{\tor, \bfZ_\ell}^{f, I} \otimes_{\bfZ_\ell} A$ for $A \in {\rm LCA}_\Oo$. Here $I=[a,b]$.

\begin{definition}[Definition/Lemma] \label{def4.6}
Given $M, N \in \mC_A^I$  define a filtration on the $A$-module $\Hom_{A}(M,N)$ by $$\Hom_{A}(M,N)^i=\{f\in\Hom_{A}(M,N)\mid f(M^j)\subset N^{j+i} \text{ for all } j \in \bfZ\}$$ and   $\bfZ_\ell$-linear maps $\phi^i: \Hom_{A}(M,N)^i \to \Hom_{A}(M,N)$  by $$\phi^i(f)(\phi^j_M(m))=\phi^{i+j}_N(f(m))$$ (note that  $M=\sum \phi_M^j(M^j)$) for $ f \in \Hom_{A}(M,N)^i$ and all $m \in M^j$ and $j \in \bfZ$. We claim this defines a Fontaine-Laffaille structure and that $\Hom_{A}(M,N)\in MF_{\tor, \bfZ_\ell}^{f, [a-b, b-a]} \otimes_{\bfZ_\ell} A.$
\end{definition}

\begin{proof}
   First note that there exists a canonical $A$-module homomorphism $\psi: M^\vee \otimes_A N \to \Hom_{A}(M,N)$, where $M^\vee=\Hom_A(M,A)$.
    Definition 4.19  in \cite{Booher19}  defines a Fontaine-Laffaille structure on $M^\vee$ (and Lemma 4.20 and 4.21 prove that this structure  is well-defined and so we get an object in $MF_{\tor, \bfZ_\ell}^{f, [-b, -a]} \otimes_{\bfZ_\ell} A$). Definition 4.17  in \cite{Booher19}  then gives us the Fontaine-Laffaille structure on $M^\vee \otimes_A N$.

    We claim that transferring  this structure on $M^\vee \otimes_A N$ via  $\psi$ to $\Hom_{A}(M,N)$ matches our definition.
    Recall from \cite{Booher19} that $(M^\vee)^i=\{f\in \Hom_{A}(M,A)| f(M^k) \subset \mathbf{1}_A^{i+k} \text{ for all } k \in \bfZ\}$ and $(M^\vee \otimes N)^n=\sum_{i+j=n} (M^\vee)^i \otimes_A N^j.$
    We will first show that $\psi((M^\vee \otimes N)^n) \subset \Hom_{A}(M,N)^n$.
    Let $f_i \otimes n_j \in (M^\vee)^i \otimes_A N^j$. Then $\psi(f_i \otimes n_j): m \in M^k \mapsto f_i(m)n_j \in N^j$. In fact, the image lies in $N^{n+k}$. This is clear for $j \geq n+k$. If $j<n+k$ (and hence $0<i+k$) it follows since $f_i(m) \in \mathbf{1}_A^{i+k}=0$.
To show the reverse inclusion $\psi((M^\vee \otimes N)^n) \supset \Hom_{A}(M,N)^n$ consider  $f \in \Hom_{A}(M,N)^n$ and let $j$ be maximal among integers $l$ such that $f(M) \subset N^l$. To satisfy $f(M^k) \subset N^{k+n}$ for all integers $k$ we need $f(M^k)=0$ for $k+n>j$ by maximality of $j$. This means  that we need $f$ to factor through $M/M^{1-i}$ for  $i:=n-j$.
By \cite{Booher19} Lemma 4.20 we have $(M^\vee)^i=\Hom_A(M/M^{1-i}, A)$ so we get $$(M^\vee)^i \otimes N^j=\Hom_A(M/M^{1-i}, A) \otimes N^j\overset{\psi}{\cong}\Hom_A(M/M^{1-i}, N^j).$$ We conclude that $f \in \psi^{-1}((M^\vee)^i \otimes N^j) \subset \psi^{-1}((M^\vee \otimes N)^n)$.

    Now we check the $\bfZ_\ell$-linear maps: Recall from \cite{Booher19} that for $f \in M^\vee$ we have $\phi^i_{M^\vee}(f)(\phi^j_M(m))=\phi^{i+j}(f(m))$ for all $m \in M^j$ and $j \in \bfZ$. We also have $\phi^n_{M^\vee \otimes_A N}=\sum_{i+j=n} \phi^i_{M^\vee} \otimes \phi^j_N.$ We claim that $\phi^n_{\Hom_A(M,N)} \circ \psi=\psi \circ \phi^n_{M^\vee \otimes_A N}: (M^\vee \otimes N)^n \to \Hom_A(M,N).$ For this one calculates that both sides map $f \otimes n \in (M^\vee)^i \otimes N^{n-i}$ to the homomorphism, for which  $$\phi^k_M(m) \mapsto \begin{cases}
        0& \text{ if } i+k \geq 0\\
        \phi^{n+k}_N(f(m)x)& \text{ if } i+k \leq 0
    \end{cases}$$ for any $m \in M^k$ (for $\psi \circ \phi^n_{M^\vee \otimes_A N}$ this uses $\phi^{n+k}_N|_{N^{n-i}}=\ell^{k+i} \phi^{n-i}_N$ for $i+k \leq 0$). This claim, combined with the results in \cite{Booher19} shows that the definition of $\phi^n_{\Hom_A(M,N)}$ is well-defined and satisfies the requirements for $\Hom_A(M,N)$ to be a Fontaine-Laffaille module in $ MF_{\tor, \bfZ_\ell}^{f, [a-b, b-a]} \otimes_{\bfZ_\ell} A$.
\end{proof}


For $M, N\in \mC_A^I$
consider the map $\phi-1:  \Hom_{A}(M,N)^0 \to  \Hom_{A}(M,N)$ which takes  $f$ to the homomorphism that sends $m=\sum_j \phi^j_{M}(m_j)$ to $$\sum_j\phi^j_{N}(f(m_j))-f(m)=\sum_j \left(\phi^j_{N}(f(m_j))-f(\phi^j_{M}(m_{j}))\right).$$ Note that $\ker (\phi-1)=\Hom_{\mC_A^I}(M,N).$

\begin{prop}[\cite{ClozelHarrisTaylor08} Lemma 2.4.2, \cite{Kalloniatis19} Proposition 2.17] \label{prop2.17}
    Given $M,N \in \mC_A^I$ we have an exact sequence of $A$-modules (note that  $\Hom_{\Fil, A}(M,N)$ in \cite{Kalloniatis19} equals $\Hom_{A}(M,N)^0$)
    $$0 \to \Hom_{\mC_A^I}(M,N)  \to \Hom_{A}(M,N)^0 \overset{\phi-1}{\to} \Hom_{A}(M,N) \to \Ext^1_{\mC_A^I}(M,N) \to 0.$$
\end{prop}

Given $M,N \in \mC_A^I$ we write ${\rm FL}(M)>{\rm FL}(N)$ if there is an integer $j$ such that all elements of ${\rm FL}(M)$ are greater than or equal to $j$, and all elements of ${\rm FL}(N)$ are strictly less than $j$.

\begin{prop} \label{Kp238}   The extension group $\Ext^1_{\mC_A^I}(M,N)$ is a  finitely generated $A$-module. Furthermore one has 

    \begin{enumerate}[(i)]
        \item If ${\rm FL}(M)>{\rm FL}(N)$ then $\Ext^1_{\mC_A^I}(M,N) \cong \Hom_A(M,N)$, in particular it is a free $A$-module and   ${\rm rk}_A(\Ext^1_{\mC_A^I}(M,N))={\rm rk}_A(M) {\rm rk}_A(N)$.
        \item If ${\rm FL}(M)<{\rm FL}(N)$ then $\Ext^1_{\mC_A^I}(M,N)=0$.
    \end{enumerate}
\end{prop}
\begin{proof}
    This follows from Proposition \ref{prop2.17}. In particular, $\Ext^1_{\mC_A^I}(M,N)$ is a quotient of the finitely generated $A$-module $\Hom_A(M,N)$.  The calculation on \cite{Kalloniatis19} p. 238 (``two notable cases")  is carried out for $MF^{f, [0,\ell-1]}_{\tor, \bfZ_\ell} \otimes_{\bfZ_\ell} A$, but applies verbatim to $\mC_A^I$. 
    If ${\rm FL}(M)>{\rm FL}(N)$ then this calculation shows that $\Hom_A(M,N)^0=0$, while if ${\rm FL}(M)<{\rm FL}(N)$ then one gets $\Hom_A(M,N)^0=\Hom_A(M,N)$.
\end{proof}

\begin{prop}[Hom-tensor adjunction] \label{Homtensor} Let $M, N \in \mC_A^I$. Assume that $\Hom_{A}(M,N)$ equipped with the filtration as in Definition \ref{def4.6} is an object in $\mC_A^I$ and that $0 \in I$.
Then there exists a canonical isomorphism of $A$-modules:
$$\Ext^1_{\mC_A^I}(M,N)\cong \Ext^1_{\mC_A^I}(\mathbf{1}_A, \Hom_{A}(M,N)).$$
\end{prop}

\begin{proof}
The statement follows from the existence of the following commutative diagram with exact columns:
\begin{equation} \label{eqn4.2} \xymatrix{0\ar[d]&0\ar[d]\\
\Hom_{\mC_A^I}(M,N) \ar[d]&\Hom_{\mC_A^I}(\mathbf{1}_A,\Hom_{A}(M.N))\ar[d]\\
 \Hom_{A}(M,N)^0 \ar[r]^-{\psi'}\ar[d]_{\phi-1} &\Hom_{A}(\mathbf{1}_A, \Hom_{A}(M,N))^0\ar[d]^{\phi-1}\\
 \Hom_{A}(M,N) \ar[r]^-{\psi}\ar[d]_{\alpha} &  \Hom_{A}(A, \Hom_{A}(M,N))\ar[d]\\
 \Ext^1_{\mC_A^I}(M,N)\ar[r]^-{\tilde \psi} \ar[d]& \Ext^1_{\mC_A^I}(\mathbf{1}_A,\Hom_{A}(M,N))\ar[d]\\
0&0}\end{equation}

The exactness of both columns follows from Proposition \ref{prop2.17}.  
The second horizontal arrow is the usual isomorphism $\psi$ of $A$-modules given by $f\mapsto (a\mapsto af)$ (recall that the underlying module of the object $\mathbf{1}_A$  is $A$) with the inverse map sending $g$ to $g(1)$, where $1$ is the multiplicative identity of $A$.
The map $\tilde \psi$ is defined by lifting an element of $ \Ext^1_{\mC_A^I}(M,N)$ to $\Hom_{A}(M,N)$ and using $\psi$. The exactness of the first column ensures that such a map is well-defined.

The first horizontal arrow is the restriction $\psi'$ of $\psi$ to $\Hom_{A}(M,N)^0$ (note that $\Hom_{A}(M,N)^0$ is a subgroup of $\Hom_{A}(M,N)$ even though $\phi-1$ is not necessarily injective). We need to check that $\psi'$ lands in $\Hom_{ A}(\mathbf{1}_A, \Hom_{A}(M,N))^0$.  By its definition we need to check if $f(\mathbf{1}_A^j) \subset \Hom_{A}(M,N)^j$.
If $j>0$ there is nothing to check as then $\mathbf{1}_A^j=0$, so assume that $j\leq 0$. Then $\mathbf{1}_A^j=A$ and $\Hom_{A}(M,N)^j\supset \Hom_{A}(M,N)^0$. So, it is enough to show that if $f\in \Hom_{A}(M,N)^0$ 
then $\psi'(f)(A)\subset  \Hom_{A}(M,N)^0$. Let $a\in A$. Then $\psi'(f)(a)=af$ which clearly lies in $\Hom_{A}(M,N)^0$ as $\Hom_{A}(M,N)^0$ is an $A$-module.

Now let $g\in \Hom_{ A}(\mathbf{1}_A, \Hom_{A}(M,N))^0$.  We need  to show that $\psi^{-1}(g)$ lands in $\Hom_{A}(M,N)^0$. Again we need to consider $\psi^{-1}(g)(\mathbf{1}_A^j)$. If $j>0$, then $g=0$, hence we are done. Assume that $j\leq 0$. Then $\mathbf{1}_A^j=A$ and  $\psi^{-1}(g)=g(1)$. As $1\in \mathbf{1}_A^0$ and $g \in \Hom_{ A}(\mathbf{1}_A, \Hom_{A}(M,N))^0$ we must have that $g(1)\in  \Hom_{A}(M,N)^0$. So, we are done again.

This shows that $\psi'$ is a bijection, hence an isomorphism.
Hence by the second Four Lemma $\tilde \psi$ is injective, and since it is clearly surjective, it is an isomorphism.

\end{proof}

\subsection{Fontaine-Laffaille Galois representations}

Fix 
an interval $I=[a,b]$ with $a, b \in \bfZ$ and $b-a \leq \ell-2$.
In this subsection we introduce certain categories of $G_{\bfQ_\ell}$-representations and define
  a covariant version $V_I$
of the functor in \cite{FoLa} from the categories of Fontaine-Laffaille modules defined in section \ref{s4.1} to these categories of Galois representations.

Let $A_{\cris}$ and $B_{\cris}$ denote the usual  Fontaine's $\ell$-adic period rings (see Definition 7.3 and 7.7 in \cite{Fontainebook} and \cite{FontaineAnnMath82}).  
We recall that a $\bfQ_\ell[G_{\bfQ_\ell}]$-module $V$ is called crystalline if $\dim_{\bfQ_{\ell}}V = \dim_{\bfQ_{\ell}} \coh^0(\bfQ_{\ell}, V \otimes_{\bfQ_{\ell}} B_{\cris})$.  Our convention is that the Hodge-Tate weight of the cyclotomic character is $+1$.
\begin{definition} \label{defn4.5} Let 
$A\in {\rm LCA}_\Oo$. We introduce the following categories:
\begin{enumerate}[(i)]
    \item  ${\rm Rep}_{\bfZ_\ell}^{f}(G_{\bfQ_{\ell}})$,  the category of $\bfZ_\ell[G_{\bfQ_{\ell}}]$-modules that are finitely generated as $\bfZ_\ell$-modules.
    \item  ${\rm Rep}_{\tor, \bfZ_\ell}^{f}(G_{\bfQ_{\ell}})$,  the full subcategory of ${\rm Rep}_{\bfZ_\ell}^{f}(G_{\bfQ_{\ell}})$ whose objects are required to be of finite length as $\bfZ_\ell[G_{\bfQ_{\ell}}]$-modules.
\item    ${\rm Rep}_{\bfZ_\ell}^{{\rm cris}, I}(G_{\bfQ_\ell})$, the full subcategory of ${\rm Rep}_{\bfZ_\ell}^{f}(G_{\bfQ_{\ell}})$ whose objects are isomorphic to $T/T'$, where $T$ and $T'$ are $G_{\bfQ_{\ell}}$-stable finitely generated submodules of a crystalline $\bfQ_\ell$-representation with Hodge-Tate weights in $I$.
    \item   ${\rm Rep}_{\tor, \bfZ_\ell}^{{\rm cris}, I}(G_{\bfQ_\ell})$,  the full subcategory of ${\rm Rep}_{\tor, \bfZ_\ell}^{f}(G_{\bfQ_{\ell}})$ whose objects are isomorphic to $T/T'$, where $T$ and $T'$ are $G_{\bfQ_{\ell}}$-stable lattices in a crystalline $\bfQ_\ell$-representation with Hodge-Tate weights in $I$. 
    \item  ${\rm Rep}_{{\rm free}, A}^{{\rm cris}, I}(G_{\bfQ_\ell})$, the category of free finite rank $A$-modules $M$ with an $A$-linear $G_{\bfQ_\ell}$-action, for which there exists a crystalline representation of $G_{\bfQ_\ell}$ defined over $E$ with Hodge-Tate weights in $I$ containing $G_{\bfQ_\ell}$-stable $\Oo$-lattices $T' \subset T$, and an $\Oo$-algebra map $A \to {\rm End}_\Oo(T/T')$ such that $M$ is isomorphic as an $A[G_{\bfQ_\ell}]$-module to $T/T'$. We will call objects of this category \emph{Fontaine-Laffaille $A$-representations (with weights in $I$).} \end{enumerate}\end{definition}
    \begin{rem}
        Definition \ref{defn4.5}(v) matches  Definition 2.1 in \cite{Kalloniatis19} .
    \end{rem}

\begin{definition} [\cite{BlochKato} p. 363, \cite{Booher19} Definition 4.7+4.9]
Similar to  \cite{Booher19} we define the following two functors.
\begin{enumerate}[(i)]
    \item A covariant functor $T_{\rm cris}: MF^{f, [2-\ell,0]}_{\bfZ_\ell} \to {\rm Rep}_{\bfZ_\ell}^{f}(G_{\bfQ_\ell})$ defined via
$$T_{\rm cris}(M):=\ker\left(1-\phi^0_{A_{\rm cris} \otimes_{\bfZ_\ell} M}: {\rm Fil}^0(A_{\rm cris} \otimes_{\bfZ_\ell} M) \to A_{\rm cris} \otimes_{\bfZ_\ell} M \right).$$
\item A covariant functor   $V_I: MF^{f, [a,b]}_{\bfZ_\ell} \to {\rm Rep}_{\bfZ_\ell}^{f}(G_{\bfQ_\ell}),$
 defined via \be \label{doubletwist}V_I(M)=T_{\rm cris}(M(-b))(- b).\ee Recall that $M(-b)$ was defined in Definition \ref{defn2.5}, while $(-b)$ on the outside denotes the Tate twist as defined in Section \ref{sec:intro}.
\end{enumerate}
\begin{rem}\label{full}

    We note that for $?\in \{\emptyset, \tor\}$ the category $MF_{?, \bfZ_{\ell}}^{f,[a,b]}$ is a full  subcategory of $MF_{?, \bfZ_{\ell}}^{f,[a,a+\ell-2]}$, since they are both full subcategories of $MF_{?, \bfZ_{\ell}}^{f}$ (cf. Definition \ref{defn2.1}), so in particular \eqref{doubletwist} makes sense.
\end{rem}

\end{definition}

\begin{rem}

    Note that $V_I$ extends $T_{\rm cris}$ to general $I$ (in particular $V_{[2-\ell,0]}=T_{\rm cris}$).
    Also observe that for  $M \in  MF^{f, [a,b]}_{\tor, \bfZ_\ell}$ we have $M(-b) \in MF^{f, [2-\ell, 0]}_{\tor, \bfZ_\ell}$ since  $M(-b)^{1}=M^{b+1}=0$ and $M(-b)^{2-\ell}=M^{2-\ell+b}=M$ as $b+2-\ell \leq a$. In particular, the definition of $V_I$ makes sense.

Compared to \cite{Booher19} we work with the more restrictive interval $[2-\ell,0]$ for $T_{\rm cris}$ and correct a sign error in the Galois twist in  \cite{Booher19} Definition 4.9.

\end{rem}



\begin{theorem}[\cite{BlochKato} Theorem 4.3, \cite{Niziol93} Section 2, \cite{DiamondFlachGuoAnnScEcole04} Section 1.1.2, \cite{Hattori19} Section 2.2, \cite{Booher19} Fact 4.10, \cite{Kalloniatis19} Theorem 2.10] \label{ThmFoLa}
We have:
\begin{enumerate}[$(i)$]
    \item The covariant functor $V_{[a,b]}:MF^{f, [a,b]}_{\bfZ_\ell} \to {\rm Rep}^f_{\bfZ_\ell}(G_{\bfQ_\ell})$ is well-defined, exact and fully faithful.
    \item For $M \in MF^{f, [a,b]}_{\bfZ_\ell}$ one has $V_{[a,b]}(M)=\invlim_n V_{[a,b]}(M/\ell^n)$.
    \item The essential image of $V_{[a,b]}$
    is closed under formation of sub-objects,  quotients
    and finite direct sums. It is given by the subcategory  
    ${\rm Rep}_{\bfZ_\ell}^{{\rm cris}, [-b,-a]}(G_{\bfQ_\ell})$.
    For $M \in  MF^{f, [a,b]}_{\tor, \bfZ_\ell}$ the lengths  of $M$ and $V_I(M)$ as $\bfZ_\ell$-modules agree; in particular the essential image of $ MF^{f, [a,b]}_{\tor, \bfZ_\ell}$ under $V_{[a,b]}$ is  ${\rm Rep}_{\tor, \bfZ_\ell}^{{\rm cris}, [-b,-a]}(G_{\bfQ_\ell})$.
    \item  For $A \in {\rm LCA}_\Oo$, the functor $V_{[a,b]}$ induces a functor from $MF^{f, [a,b]}_{\tor, \bfZ_\ell} \otimes_{\bfZ_\ell} A$ to the category of free finite rank $A$-modules with an $A$-linear $G_{\bfQ_\ell}$-action, which we will also denote by $V_{[a,b]}$.
    Its essential image is given by ${\rm Rep}^{{\rm cris}, [-b,-a]}_{{\rm free}, A}(G_{\bfQ_\ell})$. In fact,  $V_{[a,b]}$ gives an equivalence of  categories between $ MF^{f, [a,b]}_{\tor, \bfZ_\ell} \otimes_{\bfZ_\ell} A $ and ${\rm Rep}^{{\rm cris}, [-b,-a]}_{{\rm free}, A}(G_{\bfQ_\ell})$.
\end{enumerate}

\end{theorem}

\begin{rem} {~}
\begin{enumerate}
    \item Note that for $M \in  MF^{f, [a,b]}_{\tor, \bfZ_\ell}$ we have $V_{[a+s,b+s]}(M(s))=V_{[a,b]}(M)(-s)$.
    \item For $I=[a,b]=[0, \ell-2]$ the functor $V_I$ agrees with that of the functor $\mathbb{V}$ in \cite{DiamondFlachGuoAnnScEcole04} p. 670 by \cite{Breuil1998} Proposition 3.2.1.7.
    \item For $M \in MF^{f, [a,b]}_{\tor, \bfZ_\ell} \otimes_{\bfZ_\ell} A$ the Hodge-Tate weights of $V_I(M)$ (in the sense of Definition \ref{defn4.5}(3)) equal the negatives of the Fontaine-Laffaille weights of $M$, defined in Definition \ref{defn2.1}(3), due to our convention that the Hodge-Tate weight of the cyclotomic character is $+1$.
\end{enumerate}

\end{rem}



As an immediate consequence of the equivalence of categories in Theorem \ref{ThmFoLa}(iv) we obtain the following corollary.

\begin{corollary}
    For any $M, N \in MF_{\tor, \bfZ_\ell}^{f, I} \otimes_{\bfZ_\ell} A$  there is an  isomorphism of $A$-modules \be \label{Ext} {\rm Ext}^1_{MF_{\tor, \bfZ_\ell}^{f, I} \otimes_{\bfZ_\ell} A}(M,N)\cong{\rm Ext}^1_{ {\rm Rep}_{A}^{{\rm cris}, -I}(G_{\bfQ_{\ell}})}(V_I(M),V_I(N)).\ee
\end{corollary}

\subsection{Local Selmer groups}
 Let $I=[a,b]$ be an interval as in the previous section (so $0\leq b-a\leq \ell-2$) but we now also require that $0 \in I$ (so that $\mathbf{1} \in  MF^{f, I}_{\bfZ_\ell}$, see Definition \ref{def4.3}).

    For an extension between two objects $M,N$ in ${\rm Rep}_{A}(G_{\bfQ_\ell})$ $0 \to M \to E \to N \to 0$ we define the $n$-th Tate twist of the extension to be the extension $0 \to M(n) \to E(n) \to N(n) \to 0.$ For a subgroup $G$ of ${\rm Ext}^1_{{\rm Rep}_{A}(G_{\bfQ_{\ell}})}(M,N)$  we define $G(n)$ to consist of extensions which are the $n$-th Tate twists of the elements of $G$.


Given an extension $\mE \in {\rm Ext}^1_{MF^{f, I}_{\tor, \bfZ_\ell} \otimes_{\bfZ_\ell} A}(M_3,M_1)$ represented by an exact sequence  $$0 \to M_1 \to M_2 \to M_3 \to 0$$ we will write $V_I(\mE)$ for the extension in ${\rm Ext}^1_{{\rm Rep}^{{\rm cris}, -I}_{{\rm free}, A}(G_{\bfQ_{\ell}})}$$(V_I(M_3),V_I(M_1))$ represented by $$0 \to V_I(M_1) \to V_I(M_2) \to V_I(M_3) \to 0.$$ This uses the exactness of the functor $V_I$ (cf. Theorem \ref{ThmFoLa}(i)).
Since we defined  $V_I(M)=T_{\rm cris}(M(-b))(-b)$ (see Equation \eqref{doubletwist}) we conclude the following lemma:

\begin{lemma} \label{l4.21}
For
$A \in {\rm LCA}_\Oo$ and $M \in MF^{f, I}_{\tor, \bfZ_\ell} \otimes_{\bfZ_\ell} A$ we have
\begin{align*}
    V_I({\rm Ext}^1_{MF^{f, I}_{\tor, \bfZ_\ell} \otimes_{\bfZ_\ell} A}(\mathbf{1}_A,M))&={\rm Ext}^1_{{\rm Rep}^{{\rm cris}, -I}_{{\rm free}, A}(G_{\bfQ_\ell})}(T_{\rm cris}(M_{-b, A})(-b),T_{\rm cris}(M(-b))(-b))\\ &\cong {\rm Ext}^1_{{\rm Rep}^{{\rm cris}, [0,\ell-2]}_{{\rm free}, A}(G_{\bfQ_\ell})}(A(b), T_{\rm cris}(M(-b)))(- b).
\end{align*}Note that the latter is naturally isomorphic to ${\rm Ext}^1_{{\rm Rep}^{{\rm cris}, [0,\ell-2]}_{{\rm free}, A}(G_{\bfQ_{\ell}})}(A(b), T_{\rm cris}(M(-b)))$ and they give rise to the same subgroup of $H^1(\bfQ_\ell, V_I(M))$, see Definition \ref{def3.5}.
\end{lemma}

\begin{definition} \label{def3.5}
For $M \in MF^{f, I}_{\tor, \bfZ_\ell} \otimes_{\bfZ_\ell} A$ let $H^1_{f, I}(\bfQ_\ell, V_I(M))=V_I({\rm Ext}^1_{MF^{f, I}_{\tor, \bfZ_\ell}\otimes_{\bfZ_\ell} A}(\mathbf{1}_A,M)) \subset  
H^1(\bfQ_\ell, V_I(M))$.

\end{definition}


\begin{rem}
 This is a more precise version of the definition made in \cite{BergerKlosin13} Section 5.2.1 (where the prime $\ell$ was denoted by $p$).
In \cite{BergerKlosin13} we worked (implicitly) with $I=[0,p-2]$, but the results in \cite{BergerKlosin13} Section 5 (in particular Corollary 5.4 and Proposition 5.8 restated below) carry over to $H^1_{f, I}$ defined here for general $I$.

    T.B. and K.K. would like to clarify how certain definitions and results in some of our papers fit in with this more precise description of the groups $H^1_{f, I}$: In \cite{BergerKlosin19} the relevant interval $I$ is $I=[1-k, k-1]$ for Section 5, and $p$ should satisfy $p-1>2k-2$. The examples in Section 6 of [loc. cit] satisfy this stronger condition. Similarly in \cite{BergerKlosin20} one has $I=[3-2k, 2k-3]$ ($p-1>4k-6$). In \cite{BergerKlosin13} Section 6  the suitable interval $I$ is such that  $\Hom_\Oo(\tilde \rho_2, \tilde \rho_1)$ has Hodge-Tate weights in  $I$.   For $i, j \in \{1, 2\}$ the local condition at $v \mid p$ for the Selmer groups  $H^1_\Sigma(F, \Hom_\bfF(\rho_i,  \rho_j))$ is $H^1_{f, I}(F_v, \Hom_\bfF(\tilde \rho_i, \tilde \rho_j))$. In [loc.cit.] Section 9 one has $I=[-1,1]$ ($p-1>2$), in Section 10  $I=[1-k,k-1]$ ($p-1>2k-2$). In \cite{BergerKlosin15} Sections 7 and 8 the same comment applies as for \cite{BergerKlosin13} Section 9.

    In J.B.'s paper \cite{BrownCompMath07} the argument in Sections 8 and 9 to show the splitting at $\ell$ of $\begin{pmatrix}
        \ov{\epsilon}^{k-2}&*\\0&\ov{\epsilon}^{k-1}
    \end{pmatrix}$ by relating it to $H^1_f(\bfQ_\ell, \bfF(-1))=0$ requires an interval $I$ containing $-1$ and $2k-3$, so would need $p-1>2k-2$. However, one could instead not twist and invoke Proposition \ref{Kp238}.

    Similar comments apply to other results in the literature, e.g. in \cite{DiamondFlachGuoAnnScEcole04} Corollary 2.3 the expression $H^1_f(\bfQ_\ell, {\rm ad}^0_{\kappa} \ov{L})$ is only indirectly defined by $H^1_f(\bfQ_\ell, {\rm ad}_{\kappa} \ov{L})=H^1_f(\bfQ_\ell, {\rm ad}^0_{\kappa} \ov{L}) \oplus H^1_f(\bfQ_\ell, \kappa)$. To define the Selmer group for the trace zero endomorphisms and prove this identity requires ${\rm ad}^0_{\kappa}$ to lie in the essential image of the Fontaine-Laffaille functor, and therefore $I=[1-k, k-1]$ should be specified, rather than $I=[0, \ell-2]$ as in \cite{DiamondFlachGuoAnnScEcole04} Section 1.1.2.

\end{rem}

If $M,N \in {\rm Rep}_{{\rm free}, A}^{{\rm cris}, I}(G_{\bfQ_{\ell}})$, then $M\oplus N\in {\rm Rep}_{{\rm free}, A}^{{\rm cris}, I}(G_{\bfQ_{\ell}})$ and it is clear that \be \label{eqn4.3} H^1_{f,I}(\bfQ_{\ell}, M\oplus N)=H^1_{f, I}(\bfQ_{\ell}, M)\oplus H^1_{f,I}(\bfQ_{\ell},  N)\ee because the extension groups as well as the functor $V_I$ commute with direct sums.

\begin{prop} \label{lem4.18}
For any $n \in [2-\ell, \ell-2]$  such that $0, -n \in I$ the group $H^1_{f, I}(\bfQ_\ell, V_I(M_{-n, \bfF}))$ is independent of $I$. In fact we have
$$H^1_{f, I}(\bfQ_\ell, \bfF(n))=\begin{cases} 0 & n<0\\
H^1_{\un}(\bfQ_\ell, \bfF) & n=0\\
H^1_{\rm fl}(\bfQ_\ell, \mu_\ell)& n=1\\
H^1(\bfQ_\ell, \bfF(n))& n>1,\end{cases}$$ where $$H^1_{\un}(\bfQ_\ell, \bfF):=\ker(H^1(\bfQ_\ell, \bfF) \to H^1(I_\ell, \bfF)) \cong \Hom(G_{\bfQ_\ell}/I_\ell, \bfF)$$ and $H^1_{\rm fl}(\bfQ_\ell, \mu_\ell)$ denotes the peu ramifi\'ee classes, namely those classes corresponding to $\bfZ_\ell^\times/(\bfZ_\ell^\times)^\ell \subset \bfQ_\ell^\times/(\bfQ_\ell^\times)^\ell \cong H^1(\bfQ_\ell, \bfF(1))$.
For $n \geq 0$ we note that $\dim_\bfF H^1_{f, I}(\bfQ_\ell, \bfF(n)) =1$.
\end{prop}

\begin{rem}
\begin{enumerate}
    \item Proposition \ref{lem4.18} justifies writing $H^1_{\Sigma}(\bfQ_\ell, V_I(M_n))$ as we did in \cite{BergerKlosin19}, without specifying the interval $I$, as long as $I$ contains $-n$. Under the conditions of Proposition \ref{DFG2.2} (see comment after Proposition \ref{prop5.8}), once we have fixed a suitable interval $I$ we will also drop the subscript $I$ in this paper.
\item Note that the definition of $H^1_{f, I}(\bfQ_\ell, V_I(M_n))$ depends on $n \in \bfZ$,
even though the coefficients $V_I(M_n)=\bfF(n)$ only depend on $n \mod{\ell-1}$.
\item \cite{Niziol93} Section 9.3 states a version
of this result for the local crystalline cohomology of unramified extensions of $\bfQ_\ell$ and with $\bfZ_\ell/\ell^m(n)$ coefficients for $m \in \bfZ_{>0}$.
\end{enumerate}

\end{rem}

\begin{proof}
We first note that $H^1(\bfQ_\ell, \bfF(n))$ is $1$-dimensional for $n \neq 0,1$, which follows from local Tate duality and the Euler characteristic formula, see e.g.  \cite{WashingtonBoston} Theorem 1 and Proposition 3.

For $n=0$ we refer the reader to \cite{ClozelHarrisTaylor08} Corollary 2.4.4 for identifying $H^1_{f, I}(\bfQ_\ell, \bfF(n))$ with $H^1_{\un}(\bfQ_\ell, \bfF)$. That $H^1_{\un}(\bfQ_\ell, \bfF)$ is $1$-dimensional follows since $\#H^1(G_{\bfQ_\ell}/I_\ell, \bfF)=\#H^0(\bfQ_\ell, \bfF)$.
Recall that $$H^1_{f, I}(\bfQ_\ell, \bfF(n))=H^1_{f, I}(\bfQ_\ell, V_I(M_{-n, \bfF}))=V_I({\rm Ext}^1_{MF^{f, I}_{\tor, \bfZ_\ell} \otimes_{\bfZ_\ell} \bfF}(M_{0, \bfF},M_{-n, \bfF})).$$
If $n<0$
then
 by Proposition \ref{Kp238}(ii)
${\rm Ext}^1_{MF^{f, I}_{\tor, \bfZ_\ell}\otimes_{\bfZ_\ell} \bfF}(M_{0, \bfF},M_{-n,\bfF})=0$ since the Fontaine-Laffaille weights satisfy the inequality $-n>0$.

On the other hand, if $n> 0$ then $H^1_{f, I}(\bfQ_\ell, V_I(M_{-n}))$ is $1$-dimensional by Proposition \ref{Kp238}(i).
For $n>1$ this equals $H^1(\bfQ_\ell, \bfF(n))$ by our observation at the start of the proof.

For $n=1$ we have $H^1(\bfQ_\ell, \bfF(1)) \cong \bfQ_\ell^\times/(\bfQ_\ell^\times)^\ell$ is 2-dimensional, and one can identify the Fontaine-Laffaille extensions with the peu ramifi\'ee classes (see e.g. \cite{BreuilBarcelona} Lemma 8.1.3).
\end{proof}

\begin{rem}

Note that $[2-\ell, 0]$ contains both $0$ and $2-\ell$ (and is the only interval of this length that contains both).  Then since $\bfF(-1)=\bfF(\ell-2)=V_{[2-\ell, 0]}(M_{2-\ell})$ we get
\begin{align*}
    H^1_{f, [2-\ell, 0]}(\bfQ_\ell, \bfF(-1))&=H^1_{f, [2-\ell, 0]}(\bfQ_\ell, \bfF(\ell-2))\\
    &=H^1_{f, [2-\ell, 0]}(\bfQ_\ell, V_{[2-\ell, 0]}(M_{2-\ell, \bfF}))\\ &\neq 0,
\end{align*}
corresponding to the crystalline non-split extension $\begin{pmatrix} \ov{\epsilon}^{\ell-2}&*\\0&1 \end{pmatrix}$.  Note that $1 \notin [2-\ell, 0]$. 

However for all other intervals $I \subset [2-\ell, \ell-2]$ of length $\ell-2$ we have $1 \in I$ and so
\begin{align*}
H^1_{f, I}(\bfQ_\ell, \bfF(-1))&=V_I({\rm Ext}^1_{ MF^{f, [a,b]}_{\tor, \bfZ_\ell}}(M_{0, \bfF},M_{1, \bfF}))\\
&=T_{\rm cris}({\rm Ext}^1_{MF^{f, [2-\ell,0]}_{\tor, \bfZ_\ell}}(M_{-b, \bfF},M_{1-b, \bfF}))(-b)\\
&=0
\end{align*}
by Proposition \ref{lem4.18}.
This demonstrates that  $H^1_{f, I}(\bfQ_\ell, \bfF(n))$ is only independent of $I$ for $I$ containing $-n$.


\end{rem}

 Following \cite{BlochKato}   for a $\bfQ_\ell[G_{\bfQ_\ell}]$-module $V$ define
    $\coh_{f}^1(\bfQ_\ell,V)= \ker\left(\coh^1(\bfQ_\ell,V) \rightarrow \coh^1(\bfQ_\ell, V \otimes_{\bfQ_{\ell}} B_{\cris})\right).$ Let $V$ be a finite-dimensional $E$-vector space and $T \subset V$ a $G_{\bfQ_\ell}$-stable $\Oo$-lattice, i.e., $T$ is a free $\Oo$-submodule of $V$ that spans $V$ as a vector space over $E$.  We set $W = V/T$ and $W[\lambda^{m}] = \{w \in W: \lambda^{m}w = 0\} \cong T/\lambda^{m} T$ for any $m \in \bfZ_{>0}$. Note that $W[\lambda^{m}]$ lies in ${\rm Rep}^{{\rm cris}, -I}_{\Oo/\lambda^m}(G_{\bfQ_\ell})$ if $V$ is crystalline with Hodge-Tate weights in $-I$. We let $\coh^1_{f}(\bfQ_\ell,W)$ be the image of $\coh^1_{f}(\bfQ_\ell,V)$ under the natural map $\coh^1(\bfQ_\ell,V) \rightarrow \coh^1(\bfQ_\ell,W)$.

\begin{prop}[\cite{DiamondFlachGuoAnnScEcole04} Proposition 2.2] \label{DFG2.2}
Assume $V$ is a crystalline $E[G_{\bfQ_\ell}]$-module as above with Hodge-Tate weights in $-I=[-b,-a]$ (and $0 \in I$). For $T \subset V$ and $W=V/T$ as above we then have $H^1_{f}(\bfQ_\ell, W)= \dirlim_m H^1_{f,I}(\bfQ_\ell, W[\lambda^m])$.
\end{prop}

\begin{proof}
    We note that the proof of \cite{DiamondFlachGuoAnnScEcole04} Proposition 2.2 carries over from $[0, \ell-2]$ to general $I$ (in particular one has Proposition \ref{prop2.17}) and apply the argument with (in their notation) $V_1$ the trivial $G_{\bfQ_\ell}$-representation and $V_2=V$.
\end{proof}

\begin{corollary}[\cite{DiamondFlachGuoAnnScEcole04} (33), \cite{BergerKlosin13} Corollary 5.4] \label{Cor5.4}
    For every $m \in \bfZ_{>0}$ we have an exact sequence of $\Oo$-modules
    $$0 \to H^0(\bfQ_\ell, W)/\lambda^m \to H^1_{f, I}(\bfQ_\ell, W[\lambda^m]) \to H^1_f(\bfQ_\ell, W)[\lambda^m] \to 0.$$
\end{corollary}

\begin{corollary}
For $n \in \bfZ$ with $0, n \in I \subset[2-\ell, \ell-2]$ and $n \neq 0$ we have $$H^1_{f, I}(\bfQ_\ell, V_I(M_{-n, \bfF}))=
H^1_f(\bfQ_\ell, E/\Oo(n))[\lambda].$$
\end{corollary}

\begin{proof}
Note that $H^0(\bfQ_\ell, E/\Oo(n)[\lambda])=0$ since $n \not \equiv 0 \mod{\ell-1}$.
    This implies $H^0(\bfQ_\ell, E/\Oo(n))=0$, hence we are done by Corollary \ref{Cor5.4}.
\end{proof}

\section{Selmer Groups} \label{Selmer group}
\subsection{Definitions} \label{s5.1}

For $M$ a topological $\bfZ_\ell[G_\bfQ]$-module set
\begin{equation*}
    \coh^1_{\un}(\bfQ_{p},M):= \ker\left(\coh^1(\bfQ_{p},M) \rightarrow \coh^1(I_{p},M)\right)
    \end{equation*}
for every prime $p$.
Let $E/\bfQ_{\ell}$ be a finite extension with valuation ring $\mO$, uniformizer $\lambda$, and residue field $\bfF$.
Let $V$ be a finite dimensional $E$-vector space on which one has a continuous $E$-linear $G_{\bfQ}$ action. 
For finite primes $p$ with $p \neq \ell$, we set
\begin{equation*}
    \coh^1_{f}(\bfQ_p,V) = \coh^1_{\un}(\bfQ_p,V).
\end{equation*}

For $p=\ell$, we recall from Section \ref{Fontaine-Laffaille} that
\begin{equation*}
    \coh_{f}^1(\bfQ_\ell,V)= \ker\left(\coh^1(\bfQ_\ell,V) \rightarrow \coh^1(\bfQ_\ell, V \otimes_{\bfQ_{\ell}} B_{\cris})\right).
\end{equation*}

 Let $T \subset V$ be a $G_{\bfQ}$-stable $\mO$-lattice. We set $W = V/T$ and $W[\lambda^{n}] = \{w \in W: \lambda^{n}w = 0\} \cong T/\lambda^{n} T$. For every $p$ we let $\coh^1_{f}(\bfQ_p,W)$ be the image of $\coh^1_{f}(\bfQ_p,V)$ under the natural map $\coh^1(\bfQ_p,V) \rightarrow \coh^1(\bfQ_p,W)$. We have $\coh^1_{f}(\bfQ_p,W) = \coh^1_{\un}(\bfQ_p, W)$ for all $p \neq \ell$, as long as $V$ is unramified at $p$, which for us will always be the case.

We define the global Selmer group of $W$ as
 \begin{equation*}
     \coh^1_{f}(\bfQ,W) = \ker \left\{\coh^1(\bfQ,W) \rightarrow \bigoplus_{p} \frac{\coh^1(\bfQ_{p},W)}{\coh^1_{f}(\bfQ_{p}, W)}\right\}.
 \end{equation*}

  We note that as $H^1_{f}(\bfQ_\ell, W)$ commutes with direct sums and so clearly does $H^1_{\un}(\bfQ_\ell, W)$, we get that $H^1_f(\bfQ, W)$ does as well.

 Let $I=[a,b]$ with $a, b \in \bfZ$ and $b-a \leq \ell-2$ and assume that $0 \in I$. If $V$ is crystalline with Hodge-Tate weights in $-I$ we define
 \begin{align*}
     \coh^1_{f, I}&(\bfQ,W[\lambda^n])\\ &= \ker \left\{\coh^1(\bfQ,W[\lambda^n]) \rightarrow \bigoplus_{p \neq \ell} \frac{\coh^1(\bfQ_{p},W[\lambda^n])}{\coh^1_{\rm un}(\bfQ_{p}, W[\lambda^n])}\oplus  \frac{\coh^1(\bfQ_{\ell},W[\lambda^n])}{\coh^1_{f, I}(\bfQ_{\ell}, W[\lambda^n])}\right\}.
 \end{align*}

  As noted in \eqref{eqn4.3}  $H^1_{f}(\bfQ_\ell, W[\lambda^n])$ also commutes with direct sums and so we get that $H^1_{f, I}(\bfQ, W[\lambda^n])$ does as well.

  \begin{prop} \label{prop5.8}
   Assume that the interval $I=[a,b]$ contains $0$ and $V$ is $E[G_{\bfQ}]$-module which is  finite-dimensional as an $E$-vector space and a crystalline $G_{\bfQ_{\ell}}$-module with Hodge-Tate weights in $-I$.
 If $H^0(\bfQ, W[\lambda])=0$ then we have
 $$H^1_f(\bfQ, W)[\lambda^n] \cong H^1_{f,I}(\bfQ, W[\lambda^n]).$$

  \end{prop}

\begin{proof}
 \cite{BergerKlosin13} Proposition 5.8 proves the claim under the assumption $H^0(\bfQ, W)=0$.

Suppose we have $\alpha \in \coh^0(\bfQ, W)$.  We know every element of $W$ is annihilated by some power of $\lambda$, so if $\alpha \neq 0$ there is an integer $m$ so that $\lambda^{m} \alpha =0$ but $\lambda^{n} \alpha \neq 0$ for all $0 < n < m$.  However, this gives $\lambda^{m-1} \alpha \in \coh^0(\bfQ,  W[\lambda]) =0$, so it must be that $\alpha = 0$.  Thus, $\coh^0(\bfQ, W) = 0$ as desired.
\end{proof}
 After a suitable interval $I$ has been fixed we will therefore also drop the subscript $I$ and write $H^1_{f}(\bfQ, W[\lambda^n])$.

 Let $G$ be a group, $R$ a commutative ring with identity, and $M_{i}$ finitely generated free $R$-modules with $R$-linear action given by $\rho_{i}: G \rightarrow \GL_R(M_i)$ for $i=1,2$.  The action of $G$ on $\Hom_{R}(\rho_2,\rho_1)$  is given by
$(g\cdot \varphi)(v) = \rho_1(v) \varphi(\rho_2(g^{-1})v).$
In particular, if $\rho_1 = \rho_2 = \rho$, we define the adjoint representation of $\rho$ to be the $R[G]$-module $\ad \rho = \Hom_R(\rho,\rho)$.  We write $\ad^0\rho$ for the $R[G]$-submodule of $\ad\rho$ consisting of endomorphisms of trace zero.

If $\rho$ is of rank $n$ and $2n \in R^\times$ then we have an isomorphism of $R[G]$-modules
\be \label{eq5.2} \ad \rho \cong\ad^0 \rho \oplus R.
\ee

\subsection{Non-vanishing of a Selmer group} \label{s5.2}

In this section we explain how the congruence of a Siegel cusp form to the Klingen Eisenstein series in Section \ref{s3} leads to a non-zero element of $H^1_f(\bfQ, {\rm ad}^0(\rho_{\phi, \lambda})(2-k) \otimes E/\Oo)$.


From now on,  we fix the weight $k \geq 12$ even and the prime $\ell$ satisfying $\ell>4k-5$ and impose Assumption \ref{admis} on the field $E/\bfQ_\ell$.
Let $\phi \in S_{k}(\Gamma_1)$ be a normalized eigenform. 
Let $\rho_{\phi}$ be the $\lambda$-adic Galois representation associated to $\phi$ and assume $\ov{\rho}_{\phi}$ is irreducible.  Let $f \in S_{k}(\Gamma_2)$ be an eigenform with irreducible Galois representation $\rho_{f}$ so that $f$ is eigenvalue congruent to $E_{\phi}^{2,1}$ modulo $\lambda$.

The following result shows we can choose a lattice so that the residual Galois representation  gives rise to a non-split extension.

\begin{lemma}\label{Ribet1}
There exists a $G_\bfQ$-stable lattice  in the space of $\rho_f$ such that with respect to this lattice $$\ov{\rho}_{f} = \bmat \ov{\rho}_{\phi} &* \\ &\ov{\rho}_{\phi}(k-2)\emat\not\cong \ov{\rho}_{\phi}\oplus\ov{\rho}_{\phi}(k-2).$$ \end{lemma}

\begin{proof} Using the compactness of $G_\bfQ$ one can show that there exists a $G_\bfQ$-stable lattice $\Lambda'$ in the space of $\rho_f$. 
One uses Brauer-Nesbitt Theorem together with the Chebotarev Density Theorem to conclude that
$\ov{\rho}_{f, \Lambda'}^{\rm ss}=\ov{\rho}_{\phi}\oplus \ov{\rho}_{\phi}(k-2)$. Now the existence of the desired lattice which gives the non-split extension follows from Theorem 4.1 in \cite{BergerKlosin20}.
\end{proof}

From now on, whenever we write $\rho_f$, we assume we have made a choice of lattice as in Lemma \ref{Ribet1}, so we consider $\rho_f$ as a map from $G_\bfQ$ to $\GL_4(\Oo)$.

We now choose the interval $I=[3-2k,2k-3]$ so that it contains all the Hodge-Tate weights of $\rho_f$, $\rho_\phi$, $\rho_\phi(k-2)$, $\ad \rho_\phi(2-k)$, and $\ad \rho_\phi(k-2)$. Note that $-I=I$. We assume that $\ell-2\geq 4k-6$. When we write $H^1_f$ from now on this refers to $H^1_{f, I}$ as defined in Section \ref{s5.1}.

Let $\rho$ be any of the representations above and write $V$ for the representation space of $\rho$. We choose a $G_{\bfQ}$-stable lattice $T\subset V$ and recall that the isomorphism class of the semi-simplification of the $\bfF[G_{\bfQ}]$-representation $T/\lambda T$ is independent of the choice of $T$.  It is well-known that if $T/\lambda T$ is irreducible then the $\Oo$-length of  $H^1_f(\bfQ, W)$ is independent of $T$, where as before $W=V/T$. By Proposition \ref{prop5.8} we then conclude that also the $\Oo$-length of $H^1_f(\bfQ,W[\lambda^n])$ is independent of the choice of $T$ as long as $H^0(\bfQ,W)=0$.

\begin{lemma} \label{Lemma 5.3}
    Under our assumptions (in particular, $\ov{\rho}_{\phi}$  irreducible and $\ell >4k-5$) the modulo $\lambda$ reduction of $\ad^0\rho_{\phi}$ is irreducible.
\end{lemma}

\begin{proof}
    Assume the three-dimensional representation $\ad^0\ov{\rho}_{\phi}$ is reducible. Then it either has a one-dimensional $G_{\bfQ}$-stable subspace or quotient. Since $\ad\rho_{\phi}$ and $\mathbf{1}$ are self-dual, so is $\ad^0 \ov{\rho}_{\phi}$. Hence we can assume without loss of generality that $\ad^0\ov{\rho}_{\phi}$ has a $G_{\bfQ}$-stable line. Write $\psi$ for the character by which $G_{\bfQ}$ acts on the line.

    As $\ov{\rho}_{\phi}$ is unramified away from $\ell$ and the order of $\psi$ is prime to $\ell$, 
    we have $\psi=\ov{\epsilon}^a$ for some  integer $a \in I$.
This would require $H^0(\bfQ, \ad^0 \ov{\rho}_{\phi}(-a)) \neq 0$. Note that $H^0(\bfQ, \ad \ov{\rho}_{\phi}(-a))=\Hom_{G_\bfQ}(\ov{\rho}_\phi(a), \ov{\rho}_\phi).$
    If $a\equiv 0$ (mod ($\ell-1$)), then this space is one-dimensional by Schur's Lemma since $\ov{\rho}_{\phi}$ is irreducible. So, $H^0(\bfQ, \ad^0\ov{\rho}_{\phi})=0$, contradiction.

    If $a\not\equiv 0$ (mod ($\ell-1$)), then $H^0(\bfQ, \ad \ov{\rho}_{\phi}(-a))=H^0(\bfQ, \ad^0 \ov{\rho}_{\phi}(-a))\neq 0.$
    This means that $\ov{\rho}_{\phi}$ is isomorphic to $\ov{\rho}_{\phi}(a)$. Considering the determinant, $\ov{\epsilon}^a$ must be  the trivial character or the quadratic character  $\ov{\epsilon}^{(\ell-1)/2}$. Both are ruled out since $a \in I=[3-2k, 2k-3]$ by our assumption that $\ell >4k-5$.
\end{proof}

\begin{rem} \label{remlattice} From Lemma \ref{Lemma 5.3} we conclude that when $\rho\in \{\rho_{\phi}, \rho_{\phi}(k-2), \ad^0\rho_{\phi}(2-k), \ad^0\rho_{\phi}(k-2)\}$, the $\Oo$-length of $H^1_f(\bfQ, W)$ and $H^1_f(\bfQ, W[\lambda^n])$ are independent of the choice of $T$. As we will ever only be interested in the order of these groups, the choice of $T$ is immaterial and we will simply assume that such a choice was made. So, for example we will use the notation $H^1_f(\bfQ, \ad^0\rho_{\phi, \lambda}(k-2)\otimes E/\Oo)$, thus assuming that when we write  $\ad^0\rho_{\phi, \lambda}(k-2)$, we have made a choice of a lattice for this representation.
Likewise any one-dimensional representation $\rho$ is irreducible, so the $\Oo$-length of  $H^1_f(\bfQ, \rho\otimes E/\Oo)$ is independent of the choice of $T$.

For the representation $\ad\rho(m)$, $m\in \{k-2, 2-k\}$ (which is reducible) we choose a lattice which is a direct sum of a lattice inside $\ad^0\rho(m)$ and a lattice inside $E(m)$. So, from now on whenever we write $\ad\rho(m)$ we mean such a lattice.
Since the formation of Selmer groups commutes with direct sums we then get
\be \label{dirsum 1}H^1_f(\bfQ, \ad\rho_{\phi}(m)\otimes E/\Oo)= H^1_f(\bfQ, \ad^0\rho_{\phi}(m)\otimes E/\Oo)\oplus H^1_f(\bfQ, E/\Oo(m))\ee
for $m\in \{k-2,2-k\}$.
Note that  the $\Oo$-length (and in particular, the non-triviality) of $H^1_f(\bfQ, \ad \rho(m)\otimes E/\Oo)$  is independent of the choice of a lattice inside $\ad\rho_{\phi}(m)$ as long as it is the direct sum of lattices in $\ad^0\rho_{\phi}(m)$ and $E(m)$.

\end{rem}

\begin{theorem} \label{Selmerbound} With the set-up as above we have $H^1_f(\bfQ,  \ad\rho_{\phi}(2-k)\otimes E/\Oo)\neq 0$.
\end{theorem}

\begin{proof}
We have via Lemma \ref{Ribet1} that there is a lattice $T_{f} \subset V_{f}$ so that the residual representation $\ov{\rho}_{f}: G_{\bfQ} \rightarrow \GL_4(\bfF)$ has the form
\begin{equation} \label{eqn5.2}
    \ov{\rho}_{f} = \bmat \ov{\rho}_\phi & \psi \\ 0 & \ov{\rho}_\phi(k-2) \emat
\end{equation}
and is not semisimple.
The fact that $\psi$ as in  \eqref{eqn5.2}  gives a non-trivial class $[\psi]$ in
$\coh^1(\bfQ,\Hom_{\bfF}(\ov{\rho}_2, \ov{\rho}_1))=
\coh^1(\bfQ, \ad\rho_{\phi}(2-k)\otimes E/\Oo[\lambda])$ is clear.   We need to show that $[\psi]$ lies in $\coh^1_f(\bfQ, \ad\rho_{\phi}(2-k)\otimes E/\Oo[\lambda])$ and that the latter group injects into $\coh_f^1(\bfQ, \ad\rho_{\phi}(2-k)\otimes E/\Oo)$.

We first show that $[\psi]$ satisfies the conditions to be in $\coh^1_{f}(\bfQ, \ad\rho_{\phi}(2-k)\otimes E/\Oo[\lambda])$. We have that ${\rho}_f$ is unramified at all primes $p \neq \ell$, so the local conditions are satisfied for all primes $p \neq \ell$.

Since $f$ has level one and weight $k$,  $\rho_{f}|_{D_{\ell}}$ is crystalline with Hodge-Tate weights in $[0, 2k-3] \subset I=-I$. Hence $\ov{\rho}_f$ (considered as a  $G_{\bfQ_{\ell}}$-module)  belongs to ${\rm Rep}_{{\rm free}, \bfF}^{{\rm cris}, I}(G_{\bfQ_{\ell}})$ and gives rise to an element of ${\rm Ext}^1_{ {{\rm Rep}_{{\rm free}, \bfF}^{{\rm cris}, I}(G_{\bfQ_{\ell}})}}(\ov{\rho}_\phi(k-2),\ov{\rho}_\phi)\subset \Ext^1_{\bfF[G_{\bfQ_{\ell}}]}(\rho_\phi(k-2) \otimes E/\Oo[\lambda], \rho_\phi \otimes E/\Oo[\lambda]).$
By our choice of $I$ we can use \eqref{Ext} and Proposition \ref{Homtensor} to get a non-zero element in $${\rm Ext}^1_{ {{\rm Rep}_{{\rm free}, \bfF}^{{\rm cris}, I}(G_{\bfQ_{\ell}})}}(\bfF, \ad\rho_{\phi}(2-k)\otimes E/\Oo[\lambda])\subset \Ext^1_{\bfF[G_{\bfQ_{\ell}}]}(\bfF,  \ad\rho_{\phi}(2-k)\otimes E/\Oo[\lambda]).$$

As this extension maps to  $[\psi|_{G_{\bfQ_\ell}}]$ in $H^1(\bfQ_{\ell},  \ad\rho_{\phi}(2-k)\otimes E/\Oo[\lambda])$ under the canonical isomorphism $\Ext^1_{\bfF[G_{\bfQ_{\ell}}]}(\bfF,  \ad\rho_{\phi}(2-k)\otimes E/\Oo[\lambda])\cong H^1(\bfQ_{\ell},  \ad\rho_{\phi}(2-k)\otimes E/\Oo[\lambda])$, we conclude that $$[\psi|_{G_{\bfQ_\ell}}] \in H^1_{f}(\bfQ_{\ell},  \ad\rho_{\phi}(2-k)\otimes E/\Oo[\lambda])\subset H^1(\bfQ_{\ell},  \ad\rho_{\phi}(2-k)\otimes E/\Oo[\lambda]).$$
Therefore we have established that
$[\psi] \in H^1_{f}(\bfQ,  \ad\rho_{\phi}(2-k)\otimes E/\Oo[\lambda]).$  By Proposition \ref{prop5.8} this group is isomorphic to $H^1_{f}(\bfQ,  \ad\rho_{\phi}(2-k)\otimes E/\Oo)[\lambda]$ if $\coh^0(\bfQ, \ad\rho_{\phi}(2-k)\otimes E/\Oo[\lambda])=0$.
The latter holds since
\be \label{trivial invariant} \ad\rho_{\phi}(2-k)\otimes E/\Oo[\lambda]^{G_\bfQ}=\Hom_{G_\bfQ}(\ov{\rho}_\phi(k-2), \ov{\rho}_\phi)=0\ee as $\ov{\rho}_\phi$ and $\ov{\rho}_\phi(k-2)$ are  absolutely irreducible (by assumption) and non-isomorphic since $k-2 \not \equiv 0, \frac{\ell-1}{2} \pmod{\ell-1}$
as $\ell>4k-5$ and $k \neq 2$ (cf. the proof of Lemma \ref{Lemma 5.3}).
\end{proof}


\begin{lemma} \label{vanishing 1}
     Let $n$ be an even integer satisfying $3-2k<n \leq 0$. Assuming $\ell \nmid \#\Cl_{\bfQ(\zeta_{\ell})^{+}}^{\ov{\epsilon}^{n}}$, one has $H^1_f(\bfQ, \bfF(n))=0$ and, if additionally $n \neq 0$, $H^1_f(\bfQ, E/\Oo(n))=0$.

\end{lemma}

\begin{proof}
    We see from Proposition \ref{lem4.18} that any cohomology class in $H^1_f(\bfQ, \bfF(n))$ must vanish when restricted to $I_{\ell}$. As all classes in $H^1_f(\bfQ, \bfF(n))$ are unramified away from $\ell$, we  get that they are unramified everywhere.  Using inflation-restriction sequence where $H=\Gal(\bfQ(\zeta_{\ell})^{+}/\bfQ)$ we see that $$H^1(\bfQ, \bfF(n))\cong H^1(\bfQ(\zeta_{\ell})^{+}, \bfF(n))^H=\Hom_H(G_{\bfQ(\zeta_{\ell})^{+}}, \bfF(n)).$$ Note that everywhere unramified classes map to homomorphisms that kill all the inertia groups. Hence the image of $H^1_f(\bfQ, \bfF(n))$ lands inside $\Hom\left(\Cl_{\bfQ(\zeta_{\ell})^{+}}^{\ov{\epsilon}^{n}}, \bfF\right)=0$.

Note that  a torsion $\Oo$-module $M$ is zero if and only if $M[\lambda]=0$.
     Therefore the vanishing of $H^1_f(\bfQ, E/\Oo(n))$ follows from Proposition \ref{prop5.8}, which tells us that $H^1_f(\bfQ, E/\Oo(n))[\lambda]=H^1_f(\bfQ, \bfF(n))$ if $H^0(\bfQ,E/\Oo(n))=0$.
      We know that $H^0(\bfQ_\ell,E/\Oo(n)[\lambda])=H^0(\bfQ,\bfF(n))=0$ for $n \neq 0$ since $n \not \equiv 0 \pmod{\ell-1}$ under our assumption $\ell>4k-5$. 
\end{proof}

\begin{corollary}\label{BK conj1} Let $\phi \in S_{k}(\Gamma_1)$ be as in Theorem \ref{cong 1} and assume the hypotheses of Theorem \ref{cong 1} are satisfied.
      Assuming $\ell \nmid \#\Cl_{\bfQ(\zeta_{\ell})^{+}}^{\ov{\epsilon}^{2-k}}$ one has $H^1_f(\bfQ, \ad^0\rho_{\phi}(2-k)\otimes E/\Oo)\neq 0$.
\end{corollary}

\begin{proof}
    This follows from Theorem \ref{Selmerbound}, Lemma \ref{vanishing 1} and isomorphism \eqref{dirsum 1}.
\end{proof}

\begin{rem} \label{rem5.8} If we assume Vandiver's conjecture for the prime $\ell$, this gives that
$\ell \nmid \#\Cl_{\bfQ(\zeta_{\ell})^{+}}^{\ov{\epsilon}^{2-k}}$.
\end{rem}


\section{Modularity}\label{Modularity}

We begin with the following commutative algebra result that will be useful in this section.

\begin{lemma}\label{commalg1}
If $J$ is an ideal of $\bfF[[X_1,\dots, X_n]]$  that is strictly contained in the maximal ideal, then $\bfF[[X_1,\dots, X_n]]/J$ admits an $\bfF$-algebra surjection to $\bfF[T]/T^2$.
\end{lemma}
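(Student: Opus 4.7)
The plan is to reduce the problem to a statement about the cotangent space $\fm/\fm^2$, where $\fm=(X_1,\dots,X_n)$ is the maximal ideal of $R:=\bfF[[X_1,\dots,X_n]]$. A continuous $\bfF$-algebra map $\Phi: R\to \bfF[T]/T^2$ is uniquely determined by the images $\Phi(X_i)$, which must land in the (nilpotent) maximal ideal $(T)$. Writing $\Phi(X_i)=b_i T$ with $b_i\in\bfF$, one sees that for any $f\in R$ with expansion $f=c_0(f)+\sum_i c_i(f)X_i+f_{\geq 2}$ (where $f_{\geq 2}\in\fm^2$), we have $\Phi(f)=c_0(f)+\bigl(\sum_i c_i(f) b_i\bigr)T$. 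So $\Phi$ factors through $R/J$ if and only if the $\bfF$-linear functional $\phi:\fm/\fm^2\to\bfF$ sending $X_i\mapsto b_i$ vanishes on the image of $J$ in $\fm/\fm^2$, and $\Phi$ is surjective if and only if some $b_i$ is nonzero, i.e.\ $\phi\neq 0$.

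The construction of such a $\phi$ therefore reduces to showing that $(J+\fm^2)/\fm^2$ is a \emph{proper} subspace of the finite-dimensional $\bfF$-vector space $\fm/\fm^2$; equivalently, that $J+\fm^2\subsetneq \fm$. This is the only real content of the lemma, and I would prove it by Nakayama's lemma. Assume for contradiction that $J+\fm^2=\fm$. Then the finitely generated $R$-module $\fm/J$ satisfies
\[
\fm\cdot(\fm/J)=(\fm^2+J)/J=(J+\fm^2)/J=\fm/J,
\]
so since $\fm$ is contained in the Jacobson radical of $R/J$, Nakayama forces $\fm/J=0$, i.e.\ $J=\fm$, contradicting the hypothesis $J\subsetneq\fm$.

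Once $J+\fm^2\subsetneq\fm$ is established, pick any nonzero linear functional $\phi:\fm/\fm^2\to\bfF$ vanishing on $(J+\fm^2)/\fm^2$, set $b_i:=\phi(X_i)$, and define $\Phi:R\to\bfF[T]/T^2$ by $X_i\mapsto b_i T$. The map is well-defined and continuous because $(b_i T)$ is nilpotent, and by the explicit formula above it kills $J$ (since every $f\in J$ has $c_0(f)=0$ and $\phi$-vanishing linear part) and has image containing $T$. Passing to the quotient yields the desired surjection $R/J\twoheadrightarrow\bfF[T]/T^2$.

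The only step that requires genuine thought is the Nakayama argument for $J+\fm^2\subsetneq\fm$; once one has that, the rest is a formal linear-algebra construction, so I do not anticipate any obstacle beyond setting up the notation cleanly.
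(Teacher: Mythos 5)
Your proof is correct, and it takes a genuinely different, more conceptual route than the paper. The paper argues by iteratively eliminating variables: if $J$ contains an element whose linear part involves $X_n$, one substitutes to realize $\bfF[[X_1,\dots,X_n]]/J$ as a quotient of $\bfF[[X_1,\dots,X_{n-1}]]$, and repeats until either the image of the ideal lies in the square of the maximal ideal (whence an easy projection finishes) or one is reduced to one variable. Your approach instead reduces everything to a single application of Nakayama's lemma to $\fm/J$, showing $(J+\fm^2)/\fm^2\subsetneq\fm/\fm^2$, then constructs the surjection by choosing a nonzero linear functional that annihilates the image of $J$ in the cotangent space. This is the standard deformation-theoretic viewpoint (maps to $\bfF[T]/T^2$ are controlled by the cotangent space), it avoids the topological bookkeeping about Cauchy sequences and closed ideals in the paper's argument, and it generalizes verbatim to any complete Noetherian local $\bfF$-algebra with residue field $\bfF$. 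The paper's method buys an explicit presentation $\bfF[[X_1,\dots,X_n]]/J\cong\bfF[[X_1,\dots,X_{n-r}]]/J_{n-r}$, but that extra information is not used. One small point worth stating explicitly in a write-up: the map $X_i\mapsto b_iT$ extends to all of $\bfF[[X_1,\dots,X_n]]$ precisely because $\bfF[T]/T^2$ is a complete local $\bfF$-algebra and the $b_iT$ lie in its maximal ideal, and continuity is then automatic since the target is Artinian.
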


\begin{proof}
For a positive integer $k$ let $I_k$ be the ideal of $\bfF[[X_1,\dots, X_k]]$ generated by all the monomials of degree at least 2. Set $S_k:=\bfF[[X_1,\dots, X_k]]/I_k$ and write $\phi_k: \bfF[[X_1,\dots, X_k]]\to S_k$ for the canonical $\bfF$-algebra surjection. If $\phi_n(J)=0$, then composing $\phi_n$ with the map $S_n\to \bfF[[T]]/T^2$ sending $X_1$ to $T$ and $X_i$ for $i>1$ to zero gives the desired surjection.


Now suppose $\phi_n(J)\neq 0$. 
Without loss of generality (renumbering the variables if necessary) we may assume then that  $J$ contains an element of  the form $u:=X_n+f(X_1,\dots, X_{n-1})+g(X_1,\dots, X_n)$, where $f$ is homogeneous of degree one and all the terms in $g$ have degree at least 2. Note that we can assume without loss of generality that some power of $X_n$ appears in $g$. (Indeed, if $g$ contains no $X_n$  then we replace $u$ by $u+u^2 \in J$.)
By Theorem 7.16(a) in \cite{Eisenbud} there is a unique $\bfF$-algebra map from $\bfF[[X_1,\dots, X_n]]$ to itself sending $X_n$ to $-f-g$ and $X_i$ to itself for $i<n$. In other words, for any power series $h(X_1,\dots, X_n)$, the element $h(X_1,\dots, X_{n-1}, -f-g)$ also lives in $\bfF[[X_1, \dots, X_n]]$ and we denote it by $h'(X_1, \dots, X_n)$. Clearly $h-h'\in J$.

Thus for any power series $h$   where the smallest total degree of any term containing $X_n$ is $s$ we have $$h\equiv h' \pmod{J}$$ for some power series $h'$ with the smallest total  degree of any term containing $X_n$ equal to $s'>s$. By the same process we get an $h''$ such that $h' \equiv h'' \mod{J}$ and the smallest total degree of any term $X_n$ in $h''$ is strictly greater than $s'$. This way we can construct a sequence of power series $h_s$ where
for every $s$ we have the smallest total degree of any term containing $X_n$ being greater than or equal to $s$ and such that $h- h_s\in J$  for every $s$. We note that $h_s$ is a Cauchy sequence with respect to the $(X_1,\dots,X_n)$-adic topology (indeed, for $t,u>s$ we see that $h_t-h_u$ lies in $(X_1,\dots,X_n)^s$).  Set $h_0=\lim_{s\to \infty}h_s$. As $J$ is a closed ideal, we get that $h_0-h\in J$. For every $s$ we have $$h_0 \equiv h_s \equiv w_s \mod{X_n^s},$$ for some $w_s \in \bfF[[X_1, \dots, X_{n-1}]]$. Note that the $w_s$ also form a Cauchy sequence since $h_s$ does. Set $w:= \lim_{s \to \infty} w_s \in \bfF[[X_1, \dots, X_{n-1}]]$. Thus $h_0 \equiv w$  modulo $\bigcap_s (X_n^s) \subset \bigcap_s (X_1, \ldots, X_n)^s=0$, so $h_0 \in \bfF[[X_1, \dots, X_{n-1}]]$.

Hence the natural $\bfF$-algebra map $\psi_{n-1}:\bfF[[X_1,\dots, X_{n-1}]]\to \bfF[[X_1, \dots, X_n]]/J$ given by $h_0 \mapsto h_0+J$ is surjective. Thus we get an $\bfF$-algebra isomorphism $\bfF[[X_1,\dots, X_n]]/J \to \bfF[[X_1, \dots, X_{n-1}]]/J_{n-1}$, where $J_{n-1}=\ker \psi_{n-1}$.

If $\phi_{n-1}(J_{n-1})\neq 0$, continue this way obtaining a sequence of ideals $J_{n-2},J_{n-3},...$. If at any stage ($1\leq r\leq n-2$) we get $\phi_{n-r}(J_{n-r})=0$, then we are done. Otherwise we can eliminate all but one variable and get $\bfF[[X_1,\dots, X_n]]/J\cong \bfF[[X_1]]/J_1$ and now we must have $\phi_1(J_1)=0$ as otherwise $J_1$ and hence $J$ is maximal.
\end{proof}

Recall that in the earlier sections we fixed the weight $k \geq 12$ even and prime $\ell > 4k-5$ and imposed Assumption \ref{admis} on the field $E/\bfQ_\ell$. We also fixed the Fontaine-Laffaille interval $I=[3-2k, 2k-3]$.
Let $\phi\in S_k(\Gamma_1)$ be a newform such that $\ov{\rho}_{\phi}$ is irreducible.
The goal of this section is to prove a modularity theorem under the following  assumption:

\begin{assumption} \label{assumption section 6}
    For $k$, and $\phi$ as above we assume that

\begin{itemize}
    \item[(i)] there exists $f \in S_k(\Gamma_2)$ such that $f\equiv_{\rm ev}E^{2,1}_{\phi}$ (mod $\lambda$), and
    \item[(ii)] $\#H^1_f(\bfQ, {\rm ad}^0\rho_{\phi}(2-k)\otimes_{\Oo}E/\Oo)=\#\Oo/\lambda$ (recall that the left hand side is independent of the choice of lattice, see Remark \ref{remlattice}), and
    \item[(iii)] $H^1_f(\bfQ, \ad^0\ov{\rho}_{\phi})=0.$
\end{itemize}
\end{assumption}

\begin{rem}
Assumption \ref{assumption section 6} (i) is satisfied under the assumptions of Theorem \ref{cong 1}, and so is one inequality in Assumption \ref{assumption section 6} (ii) under the assumptions of Corollary \ref{BK conj1}. \end{rem}



We impose Assumption \ref{assumption section 6} and fix $f$ as in Assumption \ref{assumption section 6} in what follows. We will write $G_{\{\ell\}}$ for the Galois group of the maximal Galois extension of $\bfQ$ unramified away from $\ell$.
Let $\rho_f: G_{\{\ell\}}\to \GL_4(E)$  be as in Theorem \ref{Weissauer}.   Lemma \ref{lem:galirred} gives that $\rho_{f}$ is irreducible.
We will use  Mazur's deformation theory  and refer the reader to standard references such as \cite{MazurDefTheory,Ramakrishna}
for the definitions and basic properties.

\begin{definition}
        For $B \in {\rm LCN}_\Oo$ we say that a representation $\rho:G_{\bfQ_\ell} \to \GL_n(B)$ is \emph{Fontaine-Laffaille} (with Hodge-Tate weights in $-I$) if $\rho \otimes_B A$ lies in ${\rm Rep}_{{\rm free}, A}^{{\rm cris}, -I}(G_{\bfQ_\ell})$ (see Definition \ref{defn4.5}(v)) for every Artinian quotient $A$ of $B$. By Theorem \ref{ThmFoLa}(iv) this is equivalent to requiring  $\rho \otimes_B A$ to lie in the essential  image of the Fontaine-Laffaille functor.
    \end{definition}

\begin{rem} \label{rem6.4}
We know that any choice of $\Oo$-lattice $\rho_L$ in $\rho_\phi$ or $\rho_f$ is Fontaine-Laffaille in this sense, since their restrictions to $G_{\bfQ_\ell}$ lie in ${\rm Rep}_{\bfZ_\ell}^{{\rm cris}, -I}(G_{\bfQ_{\ell}})$ and therefore in the essential image of the Fontaine-Laffaille functor by Theorem \ref{ThmFoLa}(iii). Since they are also free $\Oo$-modules this implies by Theorem \ref{ThmFoLa} (iii) and (iv) that $\rho_L \otimes B$ lies in ${\rm Rep}_{{\rm free}, A}^{{\rm cris}, -I}(G_{\bfQ_\ell})$ for every Artinian quotient $B$ of $\Oo$.

\end{rem}

For any local complete Noetherian $\Oo$-algebra $A$ with residue field $\bfF$ by a deformation of a residual Galois representation $\tau: G_{\{\ell\}} \to \GL_n(\bfF)$ we will mean a strict equivalence class of lifts $\tilde{\tau}:G_{\{\ell\}}\to \GL_n(A)$ of $\tau$ that are Fontaine-Laffaille at $\ell$. This deformation condition is introduced in \cite{BergerKlosin13} Section 5.3 and \cite{ClozelHarrisTaylor08} p.35.

As is customary, we will denote a strict equivalence class of deformations by any of its members. If $\tau$ has scalar centralizer then this deformation problem is representable by a local complete Noetherian $\Oo$-algebra which we will denote by $R_{\tau}$ \cite{Ramakrishna02}. 
In particular, the identity map in $\Hom_{\Oo-{\rm alg}}(R_{\tau},R_{\tau})$ furnishes what is called the universal deformation $\tau^{\rm univ}: G_{\{\ell\}}\to \GL_n(R_{\tau})$.

\begin{lemma} \label{diag1} 
One has $R_{\ov{\rho}_{\phi}}\cong R_{\ov{\rho}_{\phi}(k-2)}\cong \Oo$.
Furthermore, $\rho_{\phi}$ (resp. $\rho_{\phi}(k-2)$) is the unique deformation of $\ov{\rho}_{\phi}$ (resp. $\ov{\rho}_{\phi}(k-2)$) to $\GL_2(\Oo)$.

 \end{lemma}

\begin{proof}
We have \be \label{eqntangent} \#\Hom_{\Oo-{\rm alg}}(R_{\ov{\rho}_{\phi}}, \bfF[X]/X^2)=\#H^1_f(\bfQ, \ad\ov{\rho}_{\phi})=0,\ee where the first equality follows from the fact that our deformation condition is the property of being Fontaine-Laffaille (see e.g., Section 2.4.1 \cite{ClozelHarrisTaylor08}), and the second one holds since
we have $H^1_f(\bfQ, \ad\ov{\rho}_{\phi})=H^1_f(\bfQ, \ad^0\ov{\rho}_{\phi}) \oplus H^1_f(\bfQ, \bfF)=0$ and $H^1_f(\bfQ, \bfF)=0$ by Lemma \ref{vanishing 1} as we have imposed Assumption \ref{assumption section 6}(iii).

By Theorem 7.16 in \cite{Eisenbud} we know that any local complete Noetherian $\Oo$-algebra with residue field $\bfF$ is a quotient of $\Oo[[X_1, \dots, X_n]]$ for some positive integer $n$.
Hence $S:=R_{\ov{\rho}_{\phi}}/(\lambda R_{\ov{\rho}_{\phi}})\cong\bfF[[X_1, \dots, X_n]]/J$ for some ideal $J$. Suppose first that $J$ is not maximal. Then by Lemma  \ref{commalg1} we know that $S$ admits a surjection $\varphi$ to $\bfF[T]/T^2$. This contradicts \eqref{eqntangent}, hence $S=\bfF$.
We now use the complete version of Nakayama's Lemma to conclude that the structure map $\Oo\to R_{\ov{\rho}_{\phi}}$ is surjective (cf. \cite{Eisenbud}, Exercise 7.2 or \cite{Matsumura} Theorem 8.4). Let us briefly explain why this version applies here. As $R_{\ov{\rho}_{\phi}}\otimes_{\Oo}\bfF\neq 0$, we see that $\lambda \in \fm$, where $\fm$ is the maximal ideal of $R_{\ov{\rho}_{\phi}}$. Hence \be \label{sep1} \bigcap_n \lambda^n R_{\ov{\rho}_{\phi}} \subset \bigcap_n\fm^n.\ee The latter intersection is zero, since $R_{\ov{\rho}_{\phi}}$ is complete, so separated with respect to $\fm$. Hence \eqref{sep1} implies that $R_{\ov{\rho}_{\phi}}$ is separated with respect to $\lambda R_{\ov{\rho}_{\phi}}$ allowing for the application of the complete version of Nakayama's Lemma.

As $\rho_{\phi}$ is a deformation to $\Oo$, we conclude that $R_{\ov{\rho}_{\phi}}=\Oo$.
This implies that if $\rho: G_{\{\ell\}}\to \GL_2(\Oo)$ is  any deformation of $\ov{\rho}_\phi$, one has $\rho \cong \rho_{\phi}$. Similarly, if $\rho: G_{\{\ell\}} \to \GL_2(\Oo)$ is a deformation of $\ov{\rho}_{\phi}(k-2)$ then $\rho(2-k)$ is a deformation of $\ov{\rho}_{\phi}$. Note that our choice of $I=[3-2k, 2k-3]$ means that this twisting stays inside our category of Fontaine-Laffaille representations. Hence we get that $\rho(2-k)\cong \rho_{\phi}$, and so we are done.\end{proof}

\begin{rem}
Note that the determinant of our deformations is automatically fixed  as $H^1_f(\bfQ, \ad\ov{\rho}_{\phi})=H^1_f(\bfQ, \ad^0\ov{\rho}_{\phi})$ under our assumptions. This  means that all  deformations $\rho$ of  $\ov{\rho}_{\phi}$ (respectively $\ov{\rho}_{\phi}(k-2)$)  satisfy $\det \rho=\epsilon^{k-1}$ (respectively  $\det \rho=\epsilon^{2k-3}$).

\end{rem}

\begin{rem} \label{rem 6.5}
    Regarding Assumption \ref{assumption section 6}(iii) we note that if one additionally assumes that $\ov{\rho}_\phi$ is absolutely irreducible when restricted to $\Gal(\ov{\bfQ}/\bfQ(\sqrt{(-1)^{(\ell-1)/2}\ell})$ then \cite{DiamondFlachGuoAnnScEcole04} Theorem 3.7 (see also \cite{HidaMFG} Theorem 5.20) relates $H^1_f(\bfQ, {\rm ad}^0 \rho_\phi \otimes E/\Oo)$ (via an $R_{\ov{\rho}_{\phi}}=\bfT$ theorem) to a  congruence ideal $\eta_\phi^\emptyset$. One can use Proposition \ref{prop5.8} to see that $H^1_f(\bfQ, \ad^0\ov{\rho}_{\phi})=H^1_f(\bfQ, {\rm ad}^0 \rho_\phi \otimes E/\Oo)[\lambda]=0$ if $\eta_\phi^\emptyset$ is coprime to $\ell$.

\end{rem}


\begin{lemma} \label{scalar centralizer}
Let $G$ be a group and $F$ be a field. For $i\in \{1,2\}$, let $n_i \in \bfZ_+$ and $\rho_i: G \to \GL_{n_i}(F)$ be an irreducible representation with $\rho_1\not\cong \rho_2$. Let $\rho: G \to \GL_{n_1+n_2}(F)$ be a representation such that $$\rho = \bmat \rho_1 & a \\ & \rho_2 \emat\not\cong \rho_1 \oplus \rho_2.$$ Then $\rho$ has scalar centralizer.
\end{lemma}

\begin{proof}
    This is a simple consequence of Schur's Lemma and the fact that $\tilde{a}: g \to \rho_2(g)^{-1}a(g)$ defines a cocycle from $G$ to $\Hom(\rho_2, \rho_1)$ which is not a coboundary. 
\end{proof}
 Fix a lattice in the space of $\rho_f$ as in Lemma \ref{Ribet1}, i.e. such that $\ov{\rho}_{f}=\bmat \ov{\rho}_{\phi} &* \\ &\ov{\rho}_{\phi}(k-2)\emat: G_{\{\ell\}}\to \GL_4(\bfF)$ is non-semisimple. For simplicity, we will  write $R$ for the universal deformation ring $R_{\ov{\rho}_{f}}$  of $\ov{\rho}_{f}$ and $\rho^{\rm univ}: G_{\{\ell\}}\to \GL_4(R)$ for the universal deformation. Note that the deformation problem is representable because $\ov{\rho}_{f}$ is non-semisimple with irreducible, mutually non-isomorphic Jordan-Holder factors, hence by Lemma \ref{scalar centralizer}  the centralizer of $\ov{\rho}_f$ consists of only scalar matrices.  We say that a deformation $\tilde\rho$ is \emph{upper-triangular} if $\tilde\rho$ is strictly equivalent to a deformation of $\ov{\rho}_{f}$ of the form $\bmat *&*\\ 0&*\emat$ with the stars representing $2\times 2$ blocks.

\begin{lemma} \label{infi} There do not exist any non-trivial deformations of $\ov{\rho}_{f}$ into $\GL_4(\bfF[X]/X^2)$ that are \emph{upper-triangular}.
\end{lemma}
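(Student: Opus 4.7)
The plan is to show that every upper-triangular deformation of $\rho$ to $\GL_4(\bfF[X]/X^2)$ is strictly equivalent to the trivial one $\rho\otimes_\bfF \bfF[X]/X^2$. Pick a representative in upper-triangular form
$$\tilde\rho = \bmat A & B \\ 0 & D \emat,$$
where $A, D$ are lifts of $\ov{\rho}_\phi$ and $\ov{\rho}_\phi(k-2)$ to $\GL_2(\bfF[X]/X^2)$, and write $B = B_0 + X B_1$ with $B_0 = \psi$ the cocycle defining the non-split extension $\rho$. By Lemma \ref{diag1} we have $R_{\ov{\rho}_\phi} = \Oo$ and likewise $R_{\ov{\rho}_\phi(k-2)} = \Oo$, so the only deformation of each diagonal block to $\bfF[X]/X^2$ (up to strict equivalence) is the trivial one via the structure map $\Oo \to \bfF \hookrightarrow \bfF[X]/X^2$. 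Conjugating $\tilde\rho$ by a block-diagonal element of the form $I + XM$ with $M \in \Mat_4(\bfF)$ block-diagonal---which preserves the upper-triangular shape---I can normalize so that $A = \ov{\rho}_\phi$ and $D = \ov{\rho}_\phi(k-2)$ are constant in $X$.

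Next I would interpret $B_1$ cohomologically. The multiplicativity of $\tilde\rho$ forces $B_1$ to be a $1$-cocycle for $\Hom_{\bfF}(\ov{\rho}_\phi(k-2), \ov{\rho}_\phi) \cong \ad\ov{\rho}_\phi(2-k)$. Since $\tilde\rho$ is unramified away from $\ell$ and Fontaine--Laffaille at $\ell$ by the very definition of deformation used here, functoriality of the Fontaine--Laffaille functor applied to $0 \to \bfF \to \bfF[X]/X^2 \to \bfF \to 0$ places $[B_1]$ in $H^1_f(\bfQ, \ad\ov{\rho}_\phi(2-k))$. Under Assumption (ii), together with Lemma \ref{vanishing 1}, this Selmer group coincides with $H^1_f(\bfQ, \ad^0\ov{\rho}_\phi(2-k))$ and is one-dimensional over $\bfF$. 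By Theorem \ref{Selmerbound} the class $[\psi] = [B_0]$ is nonzero, so it generates this Selmer group, and $[B_1] = c[B_0]$ for some $c \in \bfF$.

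Finally I would exhaust the remaining strict equivalences to remove $B_1$. Conjugation by $I + X\bmat aI_2 & 0 \\ 0 & bI_2 \emat$ preserves $A$ and $D$ (scalars commute with everything) and shifts $B_1$ to $B_1 + (a-b)B_0$; choosing $a-b=-c$ replaces $B_1$ by a coboundary. Conjugation by $I + X\bmat 0 & N \\ 0 & 0 \emat$ for an appropriate $N \in \Mat_2(\bfF)$ then kills this coboundary on the nose, again without disturbing the diagonal. After these adjustments $\tilde\rho = \rho \otimes 1$, the trivial deformation. Hence no non-trivial upper-triangular deformation exists.

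The main obstacle is the second paragraph: one must check carefully that the Fontaine--Laffaille condition imposed on the $4$-dimensional deformation $\tilde\rho$ cuts out exactly the local Fontaine--Laffaille Selmer condition on $B_1$, and that the residual Selmer group entering there is identified with the $\lambda$-torsion of the divisible Selmer group $H^1_f(\bfQ, W)$ of Assumption (ii). This identification uses Lemma \ref{vanishing 1} to dispose of the trace direction (the $\bfF(2-k)$ summand of $\ad\ov{\rho}_\phi(2-k)$) and the compatibility of $H^1_f$ with the short exact sequence above; everything else is a straightforward unwinding of the cocycle/coboundary calculus.
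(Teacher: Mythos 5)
Your argument is a genuine direct proof, whereas the paper simply cites Proposition 7.2 of \cite{BergerKlosin13}, verifying that its Assumption 6(i) follows from $\#H^1_f(\bfQ, W) \leq \#\Oo/\lambda$ and its Assumption 6(ii) from Lemma \ref{diag1}. Your unwinding — normalize the diagonal blocks via Lemma \ref{diag1}, read off the first-order piece as a cocycle $b_1$ valued in $\Hom(\ov{\rho}_\phi(k-2),\ov{\rho}_\phi)\cong\ad\ov{\rho}_\phi(2-k)$, locate $[b_1]$ in the Fontaine--Laffaille Selmer group, identify it as a multiple of $[\psi]$, and absorb the difference by a commutator with a block-scalar plus a coboundary — is mechanically correct, and the commutator computations with $I+XN$ are right. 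This is useful because it makes explicit what the cited proposition does behind the scenes, and it exhibits that the $L$-value hypothesis of Lemma \ref{diag1} is also an implicit hypothesis of the paper's own one-line proof.

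There is, however, a real dependency you smuggle in. You need $\dim_\bfF H^1_f(\bfQ, \ad\ov{\rho}_\phi(2-k)) = 1$ (the \emph{full} adjoint, since the extension of $\ov{\rho}_\phi(k-2)$ by $\ov{\rho}_\phi$ lives in $\Hom$, not in the trace-zero part). By \eqref{dirsum 1}, Assumption (ii) only controls the $\ad^0$ summand; killing the remaining $H^1_f(\bfQ,\bfF(2-k))$ summand is exactly Lemma \ref{vanishing 1}, which requires $\ell \nmid \#\Cl_{\bfQ(\zeta_\ell)^+}^{\omega_\ell^{2-k}}$ (a Vandiver-type hypothesis) that is \emph{not} among the running assumptions of Section \ref{Modularity}. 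You cite Lemma \ref{vanishing 1} without flagging that its hypothesis is not in force. Either that hypothesis needs to be added to the statement, or one needs to check that the cited Proposition 7.2 of \cite{BergerKlosin13} does not require the full $\ad$ Selmer group to be small (e.g.\ because its setup has built-in self-duality that eliminates the trace direction); as written, your direct argument has an extra unlisted hypothesis. Separately, the step placing $[b_1]$ in $H^1_f$ needs more than ``functoriality applied to $0\to\bfF\to\bfF[X]/X^2\to\bfF\to 0$'': the tangent vector a priori lands in $H^1_f(\bfQ,\ad\rho)$ and is supported on the submodule $\Hom(\ov{\rho}_\phi(k-2),\ov{\rho}_\phi)$, and you must still argue that a class in $H^1(\bfQ,M')$ whose pushforward to $H^1(\bfQ,M)$ is Fontaine--Laffaille is itself Fontaine--Laffaille — i.e., compatibility of the FL local condition with passage to submodules. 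You correctly identify this as the main obstacle in your final paragraph, but it should be carried out rather than deferred.
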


\begin{proof} We use Proposition 7.2 in \cite{BergerKlosin13} noting that Assumption 6.1(i) in [loc.cit.] is satisfied because we impose the current Assumption \ref{assumption section 6}(ii).
On the other hand, Assumption 6.1(ii) in [loc.cit.] is satisfied because of Lemma \ref{diag1}. 
\end{proof}

\begin{definition}
The smallest ideal $I$ of $R$ such that $\tr \rho^{\rm univ}$ is the sum of two pseudocharacters mod $I$ will be called the \emph{reducibility ideal} of $R$. We will denote this ideal by $I_{\rm re}$.
\end{definition}

\begin{prop}\label{universality} Let $I\subset R$ be an ideal such that $R/I$ is an Artin ring. Then $I\supset I_{\rm re}$ if and only if $\rho^{\rm univ}$ (mod $I$) is \emph{upper-triangular}.
\end{prop}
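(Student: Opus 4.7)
The proof splits into two implications, one of which is essentially tautological while the other requires input from the theory of pseudocharacters.

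The direction ``$\rho^{\rm univ} \pmod I$ upper-triangular $\Rightarrow I \supset I_{\rm re}$'' is immediate, and I would dispatch it first. If after strict equivalence we have $\rho^{\rm univ} \equiv \bmat \tilde\rho_1 & * \\ 0 & \tilde\rho_2 \emat \pmod I$ with $\tilde\rho_i: G \to \GL_2(R/I)$, then $\tr \rho^{\rm univ} \equiv \tr \tilde\rho_1 + \tr \tilde\rho_2 \pmod I$ is manifestly a sum of two $2$-dimensional pseudocharacters, so the minimality defining $I_{\rm re}$ forces $I \supset I_{\rm re}$.

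For the converse I would set $A = R/I$ (local Artinian with residue field $\bfF$) and $\rho' = \rho^{\rm univ} \pmod I$. By the assumption $I \supset I_{\rm re}$ the trace decomposes as $\tr \rho' = T_1 + T_2$, where $T_1, T_2$ are $2$-dimensional pseudocharacters of $G$ valued in $A$ whose reductions modulo $\fm_A$ are $\tr \ov\rho_\phi$ and $\tr \ov\rho_\phi(k-2)$ respectively (the labeling being forced by the residual block form of $\rho$ from Lemma \ref{Ribet1}). Because these residual representations are absolutely irreducible and pairwise non-isomorphic (here I would use $\ell > 4k-7$ to rule out isomorphism), the standard lifting theorem for pseudocharacters with absolutely irreducible reduction (Nyssen--Rouquier) produces honest representations $\rho_1, \rho_2: G \to \GL_2(A)$ with $\tr \rho_i = T_i$ and $\rho_i$ deforming the corresponding residual factor.

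The heart of the argument is then to show that, up to strict equivalence, $\rho' = \bmat \rho_1 & * \\ 0 & \rho_2 \emat$. Equivalently, one must exhibit a $G$-stable $A$-direct summand of $A^4$ of rank $2$ that lifts the residual sub-representation $\ov\rho_\phi$ inside $\ov\rho$. This is the standard consequence of the structure theorem for residually multiplicity-free Cayley--Hamilton/GMA algebras: the decomposition $\tr \rho' = T_1 + T_2$ produces a GMA structure on the image of $A[G]$ in $M_4(A)$, and the residual non-splitness (with $\ov\rho_\phi$ as the sub rather than the quotient) forces the ``lower-left'' off-diagonal module in this GMA to vanish. Choosing a basis adapted to the GMA decomposition then displays $\rho'$ in upper-triangular block form with diagonal blocks strictly equivalent to $\rho_1$ and $\rho_2$.

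The main obstacle is this last step. If needed, I would carry it out concretely by induction on the length of $A$: assuming the result already holds over $A/\fm_A^{n-1}$, one conjugates $\rho'$ into a form which is upper-triangular mod $\fm_A^{n-1}$, so the obstruction to being upper-triangular mod $\fm_A^{n}$ is a $1$-cocycle valued in $\Hom(\ov\rho_\phi, \ov\rho_\phi(k-2)) \otimes \fm_A^{n-1}/\fm_A^{n}$. The trace decomposition together with the uniqueness of the lifts $\rho_1, \rho_2$ (supplied by the representation-theoretic input above and, ultimately, by Lemma \ref{diag1}) forces this cocycle to be a coboundary, so it can be removed by a further strict equivalence. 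Distinctness of the two Jordan--H\"older constituents ensures the relevant invariants vanish and makes the strict equivalence unambiguous, completing the induction.
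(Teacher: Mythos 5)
The paper's proof of Proposition \ref{universality} is a one-line citation to Corollary 7.8 of \cite{BergerKlosin13}; the argument there is precisely the Bella\"iche--Chenevier generalized-matrix-algebra (GMA) structure theory for residually multiplicity-free Cayley--Hamilton representations, which is what your main sketch reconstructs. Your easy direction is fine, and your GMA route for the hard direction is the right one: vanishing of the reducibility ideal in $A=R/I$ gives $\mathcal{A}_{12}\mathcal{A}_{21}=0$, while residual non-splitness (together with Burnside and absolute irreducibility of the distinct factors) makes the upper-right module $\mathcal{A}_{12}$ surject onto the full $M_2(\bfF)$ residually, and these two facts together kill $\mathcal{A}_{21}$; the chosen lifted idempotent then produces a $G$-stable rank-two direct summand. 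I'd phrase this as the conjunction of the reducibility-ideal vanishing \emph{and} non-splitness, rather than attributing the vanishing of the lower-left module to non-splitness alone, but the ingredients are all there.

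The fallback argument by induction on $\length(A)$ is where you should be careful: after reducing to an upper-triangular form modulo $\fm_A^{n-1}$, the lower-left block does give a cocycle valued in $\Hom(\ov\rho_\phi,\ov\rho_\phi(k-2))\otimes\fm_A^{n-1}/\fm_A^n$, but nothing you have said shows that this class is a coboundary. The relevant $H^1$ has no reason to vanish in general, and ``uniqueness of the lifts $\rho_1,\rho_2$'' is not the mechanism (that uniqueness concerns the diagonal blocks, not the off-diagonal obstruction). What actually kills the class is the vanishing of the reducibility ideal, via the same GMA computation as in your main argument, so the induction gains you nothing over running the GMA argument directly. Furthermore, Lemma \ref{diag1}, which you invoke for the uniqueness, carries the hypothesis $L^{\rm alg}(k-1,\Symm\phi)\in\Oo^{\times}$, which Proposition \ref{universality} does not assume; the proposition is a purely deformation-theoretic statement and should not be made to depend on that $L$-value condition.
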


\begin{proof}
This is proved as Corollary 7.8 in \cite{BergerKlosin13}.
\end{proof}

\begin{corollary} \label{Osurj}  The structure map $\Oo\to R/I_{\rm re}$ is surjective  and descends to an isomorphism $\Oo/\lambda^s \to R/I_{\rm re}$  for some $s\in \bfZ_{\geq 0}\cup\{\infty\}$. In fact, one has $$R/I_{\rm re} \cong\Oo/\lambda.$$
\end{corollary}


\begin{proof}

By Theorem 7.16 in \cite{Eisenbud} we know that any local complete Noetherian $\Oo$-algebra with residue field $\bfF$ is a quotient of $\Oo[[X_1, \dots, X_n]]$ for some positive integer $n$.
Hence $S:=R/(I_{\rm re}+\lambda R)\cong\bfF[[X_1, \dots, X_n]]/J$ for some ideal $J$. Suppose first that $J$ is not maximal. Then by Lemma  \ref{commalg1} we know that $S$ admits a surjection $\varphi$ to $\bfF[T]/T^2$. This means that there
exists a non-trivial (because the image of $\varphi$ is not contained in $\bfF$) deformation of $\rho$ to $\bfF[T]/T^2$ which is upper-triangular (by Proposition \ref{universality}), which contradicts Lemma \ref{infi}.
Thus, indeed, $S=\bfF$.

Hence, the structure map $\Oo\to R/I_{\rm re}$ is surjective by the complete version of Nakayama's Lemma (see the proof of Lemma \ref{diag1}).
So,  $R/I_{\rm re}\cong \Oo/\lambda^s$ for some $s\in \bfZ_{\geq 0}\cup\{\infty\}$.

The composition of $\rho^{\rm univ}$ with the map $R\to R/I_{\rm re}$ gives rise to a deformation $\rho_{\rm re}: G_{\{\ell\}}\to \GL_4(R/I_{\rm re})=\GL_4(\Oo/\lambda^s)$.   By Proposition \ref{universality}, this deformation is upper triangular, i.e., one has
$\rho_{\rm re}=\bmat *_1&*_2\\ &*_3\emat.$
As the property of being Fontaine-Laffaille is preserved by subobjects and quotients, we see that  $*_1$ and $*_3$ are Fontaine-Laffaille representations with values in $\GL_2(R/I_{\rm re})=\GL_2(\Oo/\lambda^s)$. Thus by Lemma \ref{diag1} we can conclude that $*_1=\rho_{\phi}$, $*_3=\rho_{\phi}(k-2)$ mod $\lambda^s$. Hence by \eqref{trivial invariant}  and Proposition \ref{prop5.8}  $*_2$ gives rise to a class in $H_f^1(\bfQ,{\rm ad}^0\rho_{\phi}(2-k)\otimes_{\Oo}E/\Oo)$ as $\rho_{\rm re}$ is Fontaine-Laffaille. As $\rho$ is non-semi-simple, we conclude that $*_2$ is not annihilated by $\lambda^{s-1}$, i.e., the class of $*_2$ gives rise to a subgroup of $H^1_f(\bfQ,{\rm ad}^0\rho_{\phi}(2-k)\otimes_{\Oo}E/\Oo)$ isomorphic to $\Oo/\lambda^s$. Thus $s\leq 1$ as  $\#H^1_f(\bfQ, {\rm ad}^0\rho_{\phi}(2-k)\otimes_{\Oo}E/\Oo)\leq \#\Oo/\lambda$ by Assumption \ref{assumption section 6}(ii). Finally, $s>0$ as $\ov{\rho}_{f}$ itself is reducible.
This concludes the proof.
\end{proof}





The following Proposition does not use Assumption \ref{assumption section 6}(ii).
\begin{prop} \label{princ of I}
Assume that $\dim H^1_f(\bfQ, \ad\ov{\rho}_{\phi}(k-2))\leq 1$. Then the ideal $I_{\rm re}$ is a principal ideal.
\end{prop}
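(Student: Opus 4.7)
The plan is to invoke the theory of generalized matrix algebras (GMAs) of Bella\"iche--Chenevier, in the form adapted by Akers \cite{Akers24} to the non-self-dual setting, which is precisely the setup here (recall that the introduction already flagged that the strong self-duality required in \cite{BergerKlosin13} fails for $\rho$).

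First I would equip a conjugate of $\rho^{\rm univ}$ with a GMA structure. Since $\ov{\rho}$ has two non-isomorphic two-dimensional Jordan--H\"older blocks $\ov{\rho}_{\phi}$ and $\ov{\rho}_{\phi}(k-2)$, lifting the orthogonal idempotents $e_1,e_2$ of $\ov{\rho}$ produces idempotents in a matrix algebra over $R$ and yields finitely generated $R$-modules $B$ and $C$ (the off-diagonal entries of the GMA, corresponding to the $(1,2)$- and $(2,1)$-blocks) together with a multiplication pairing $B\otimes_R C\to R$ whose image is exactly the reducibility ideal $I_{\rm re}$. This presentation is the one used in \cite{Akers24} and was the tool that established Corollary \ref{Osurj}.

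Next I would bound $B$ and $C$ by Selmer groups. A standard argument (see \cite{BergerKlosin13} and the analogous statement in \cite{Akers24}) gives $\bfF$-linear injections
\begin{equation*}
\Hom_{\bfF}(B/\fm B,\bfF)\hookrightarrow H^1_f(\bfQ,\Hom(\ov{\rho}_{\phi}(k-2),\ov{\rho}_{\phi}))=H^1_f(\bfQ,\ad\ov{\rho}_{\phi}(2-k)),
\end{equation*}
\begin{equation*}
\Hom_{\bfF}(C/\fm C,\bfF)\hookrightarrow H^1_f(\bfQ,\Hom(\ov{\rho}_{\phi},\ov{\rho}_{\phi}(k-2)))=H^1_f(\bfQ,\ad\ov{\rho}_{\phi}(k-2)),
\end{equation*}
where the Fontaine--Laffaille condition on the universal deformation ensures landing in the $H^1_f$ subgroups. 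The target of the first injection has dimension at most one by Assumption (ii) of Section \ref{Modularity} together with the identification of $W[\lambda]$ with $\ad\ov{\rho}_{\phi}(2-k)$ (modulo the trivial contribution controlled by the Vandiver-type Lemma \ref{vanishing 1}); the target of the second injection is at most one-dimensional by the current hypothesis.

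Consequently $\dim_{\bfF} B/\fm B\le 1$ and $\dim_{\bfF} C/\fm C\le 1$. By the complete version of Nakayama's Lemma, each of $B$ and $C$ is cyclic as an $R$-module, say $B=Rb$ and $C=Rc$. Therefore
\begin{equation*}
I_{\rm re}=\image(B\otimes_R C\to R)=R\cdot bc,
\end{equation*}
which is principal. The main obstacle is the first step: verifying that one can set up the GMA structure in this non-self-dual, reducible, non-semisimple situation and identifying its reducibility ideal with $I_{\rm re}$ as defined above. This is exactly the content of Akers' refinement of the Bella\"iche--Chenevier framework, so the argument ultimately reduces to citing \cite{Akers24} and feeding in the Selmer-group hypotheses just established.
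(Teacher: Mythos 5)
Your proposal is correct and follows essentially the same route as the paper, whose proof simply cites Akers' thesis; you have supplied the details (GMA structure with off-diagonal modules $B,C$, Selmer-group injections bounding $B/\fm B$ and $C/\fm C$, Nakayama to get cyclicity, and the factorization $I_{\rm re}=R\cdot bc$), and you correctly observe that the absence of self-duality forces one to bound both $B$ and $C$ separately, which is exactly why \cite{BergerKlosin13} does not apply. One point worth flagging explicitly: the injection for $B$ lands in the full $H^1_f(\bfQ,\ad\ov{\rho}_{\phi}(2-k))$, whereas Assumption (ii) of Section \ref{Modularity} only controls the $\ad^0$ summand, so the parenthetical appeal to the Vandiver-type vanishing of Lemma \ref{vanishing 1} to kill the $\bfF(2-k)$ piece is a genuinely required input that the paper does not explicitly list among the hypotheses in force at this point.
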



\begin{proof}
Since $\rho^{\rm univ}$ is a trace representation in the sense of  Section 1.3.3 of \cite{BellaicheChenevierbook} Lemma 1.3.7 in [loc.cit.] tells us that we can conjugate $\rho^{\rm univ}$ by a matrix $P \in \GL_2(R)$ (here we use that every finite type projective $R$-module is free since $R$ is local) to get $\rho^{\rm univ}$ adapted to a data of GMA idempotents for $R[G_{\{\ell\}}]/\ker \rho^{\rm univ}$. 
By \cite{BellaicheChenevierbook} Lemma 1.3.8 we then get an isomorphism of $R$-modules $$R[G_{\{\ell\}}]/\ker \rho^{\rm univ}\cong\bmat \Mat_2(R)&\Mat_2(B)\\\Mat_2(C)&\Mat_2(R) \emat$$ for ideals $B, C \subset R$. By \cite{BellaicheChenevierbook} Proposition 1.5.1 we further  know that $I_{\rm re}=BC$.

\cite{BellaicheChenevierbook} Theorem 1.5.5 proves that there are injections $\Hom_R(B, \bfF) \hookrightarrow H^1(G_{\{\ell\}}, \ad\ov{\rho}_{\phi}(2-k))$
and
 $\Hom_R(C, \bfF) \hookrightarrow H^1(G_{\{\ell\}}, \ad\ov{\rho}_{\phi}(k-2)).$
 Arguing as in \cite{Akers24} Proposition 4.2 (see also \cite{WWEForum} Theorem 4.3.5 and Remark 4.3.6) one sees that the images are contained in the Selmer groups $H^1_f(\bfQ, \ad\ov{\rho}_{\phi}(2-k))$ and $H^1_f(\bfQ, \ad\ov{\rho}_{\phi}(k-2))$, respectively.
From Assumption \ref{assumption section 6} (ii) and Proposition \ref{prop5.8} we see that $H^1(\bfQ, \ad\ov{\rho}_{\phi}(2-k)) \cong \bfF$. Together with the assumption $\dim H^1_f(\bfQ, \ad\ov{\rho}_{\phi}(k-2))\leq 1$ we deduce by Nakayama's Lemma that both $B$ and $C$, and therefore also $I_{\rm re}$ are principal ideals of $R$. Note that Nakayama's Lemma applies since $B$ and $C$ are ideals in $R$, which is Noetherian, hence they are finitely generated over $R$.
\end{proof}

\begin{rem}

\cite{Akers24} Proposition 3.10 proves the principality of the reducibility ideal of the reduced Fontaine-Laffaille deformation ring $R^{\rm red}$ for any residual representations with two Jordan-H\"older factors.
Our argument (whilst relying on  \cite{Akers24} Proposition 4.2) is slightly more general as it allows us to treat the case of non-reduced deformation rings.
\end{rem}

\begin{rem}
    By \eqref{dirsum 1} we have
    $$H^1_f(\bfQ, \ad\ov{\rho}_{\phi}(k-2))=H^1_f(\bfQ, \ad^0\ov{\rho}_{\phi}(k-2))\oplus H^1_f(\bfQ, \bfF(k-2)).$$ However, as opposed to the case of the $(2-k)$-twist of the trivial representation (cf. proof of Lemma \ref{vanishing 1}), there is no simple relation between $H^1_f(\bfQ, \bfF(k-2))$ and part of a class group except for the case $k=2$ by Proposition \ref{lem4.18}.
By the same Proposition  for $2<k\leq \ell$ the group $H^1_f(\bfQ, \bfF(k-2))$ requires no ramification condition at $\ell$, so equals $H^1(G_{\{\ell\}}, \bfF(k-2))$. 
\end{rem}
We have the following results about $H^1(G_{\{\ell\}}, \bfF(n))$ for $n>0$:
\begin{prop}[\cite{BergerKlosin19} Proposition 6.5]
    Suppose $n \in \bfZ_{>0}$ and $n \not \equiv 1 \mod{\ell-1}$. Assume that $\ell \nmid \#\Cl_{\bfQ(\zeta_{\ell})}^{\ov{\epsilon}^{n}}$. Then $\dim H^1(G_{\{\ell\}}, \bfF(n)) \leq 1$.
\end{prop}

\begin{prop} \label{prop6.10}
    Let $n>0$ be an even integer. Assume $\ell \nmid B_n$ (the $n$-th Bernoulli number) and $n \not \equiv 0\mod{\ell-1}$. Then $H^1(G_{\{\ell\}}, \bfF(n))=0$.
\end{prop}

\begin{proof}
    Since $n$ is even and $H^0(G_{\{\ell\}}, \bfF(n))=0$ as $n \not \equiv 0\mod{\ell-1}$ we know $\dim_\bfF H^1(G_{\{\ell\}}, \bfF(n))=\dim_\bfF H^2(G_{\{\ell\}}, \bfF(n))$ by \cite{NSW} Corollary 8.7.5 (Euler Poincare characteristic). \cite{Assim95} Proposition 1.3 (condition $(ii, \beta)$) proves that $H^2(G_{\{\ell\}}, \bfF(n))=0$ if $n \not \equiv 1 \mod{\ell-1}$ (which is automatically satisfied for even $n$) and $\ell \nmid \#\Cl_{\bfQ(\zeta_{\ell})}^{\ov{\epsilon}^{1-n}}$.
  By Herbrand's Theorem  (see e.g. \cite{WashingtonCyclotomic} Theorem 6.17) the latter follows from our assumption that $\ell \nmid B_n$ (here we use again $n \not \equiv 0\mod{\ell-1}$). 
\end{proof}

\begin{rem}
    Note that the  assumption $\ell \nmid B_{n}$ is stronger than  $\ell \nmid \#\Cl_{\bfQ(\zeta_{\ell})}^{\ov{\epsilon}^{n}}$ in \cite{BergerKlosin19} Proposition 6.5. As noted in the proof of Proposition \ref{prop6.10}  $\ell \nmid B_{n}$ implies $\ell \nmid \#\Cl_{\bfQ(\zeta_{\ell})}^{\ov{\epsilon}^{\ell-n}}$ by Herbrand's Theorem.  By the ``reflection theorem" \cite{WashingtonCyclotomic} Theorem 10.9 this means that also $\ell \nmid \Cl_{\bfQ(\zeta_{\ell})}^{\ov{\epsilon}^{n}}$.

\end{rem}

This allows us to prove the following modularity theorem.
\begin{theorem} \label{main} Recall that we impose Assumptions \ref{admis} and \ref{assumption section 6}. Furthermore, assume    that $\dim H^1_f(\bfQ, {\rm ad}\ov{\rho}_{\phi}(k-2))\leq 1$.
Then
    the structure map $\iota:\Oo\to R$ is an isomorphism.  In particular, if $\tau:G_{\bfQ}\to \GL_4(E)$ is any continuous irreducible homomorphism unramified outside $\ell$, crystalline at $\ell$ with Hodge-Tate weights in $[3-2k,2k-3]$
    and such that $$\ov{\tau}^{\rm ss}=\ov{\rho}_{\phi} \oplus \ov{\rho}_{\phi}(k-2),$$ then $\tau\cong \rho^{\rm univ}\cong \rho_f$, i.e., in particular $\tau$ is modular.
\end{theorem}


\begin{proof}
It follows from Corollary \ref{Osurj} that $I_{\rm re}$ is a maximal ideal of $R$. 
As the deformation $\rho_f$ induces a surjective map $j: R\to \Oo$, we get the following commutative diagram of $\Oo$-algebra maps
\be\label{comm alg crit}\xymatrix{\Oo\ar[r]_{\iota}\ar@/^1pc/[rr]^{\rm id}\ar[d]&R\ar[r]_{j}\ar[d]&\Oo\ar[d]\\
\Oo/\lambda\ar[r]^{\ov{\iota}}\ar@/_1pc/[rr]_{\rm id}&R/I_{\rm re}\ar[r]^{\ov{j}}&\Oo/\lambda}\ee
As $\ov{\iota}$ is an isomorphism, we get that so is $\ov{j}$. So, using the fact that $I_{\rm re}$ is principal (Proposition \ref{princ of I}), we can now apply Theorem 6.9 in \cite{BergerKlosin11} to the right square to conclude that $j$ is an isomorphism.

Now, let $\tau$ be as in the statement of the Theorem. Then $\tau$ factors through a representation of $G_{\{\ell\}}$. Using that $\tau$ is irreducible,  Theorem 4.1 in \cite{BergerKlosin20} allows us to find a lattice in the space of $\tau$ such that with respect to that lattice one has $$\ov{\tau}=\bmat \ov{\rho}_{\phi} & * \\ & \ov{\rho}_{\phi}(k-2)\emat$$ that is non-semi-simple. Using Remark \ref{rem6.4} we see that this lattice is Fontaine-Laffaille, so the star gives rise to a non-zero element in $H^1_f(\bfQ, {\rm ad}^0\rho_{\phi\textbf{}}(2-k)\otimes_{\Oo}E/\Oo)$. As the latter group has order $\#\Oo/\lambda$ by Assumption \ref{assumption section 6}(ii), we conclude that $\ov{\tau}\cong \rho$. In particular, $\tau$ is a deformation of $\rho$. Hence $\tau$ gives rise to an $\Oo$-algebra map $R\to \Oo$, which must equal $j$ by the first part of the theorem.
\end{proof}

\begin{rem}
    We return to Example \ref{ex 1} and note that Assumption \ref{assumption section 6} (i) holds, as discussed earlier. Since $\ell=163$ or $187273$ do not divide $(2k-1)(2k-3)k!$ 
    for $k=26$ and $\ov{\rho}_\phi$ is irreducible, \cite{DiamondFlachGuoAnnScEcole04} Lemma 2.5 proves that  $\ov{\rho}_\phi$ stays irreducible when restricted to $\Gal(\ov{\bfQ}/\bfQ(\sqrt{(-1)^{(\ell-1)/2}\ell}))$. Via Remark \ref{rem 6.5} we can therefore check that $H^1_f(\bfQ, \ad^0\ov{\rho}_{\phi})=0$ as $\phi$ is the only cusp form of weight 26 and level 1, so in particular, $\phi$ is not congruent mod $\ell$ to other forms. Since in addition $L_{\rm alg}(50, {\rm Sym}^2 \phi)$ has $\ell$-valuation 1 for both $\ell=163$ and $187273$ the Bloch-Kato conjecture for $\#H^1_f(\bfQ, {\rm ad}^0 \rho_\phi(2-k) \otimes E/\Oo)=\#\Oo/\lambda$
    (see \cite{DummiganSymmSquareII} Conjecture (5.2) and (5)) would imply that Assumption (ii) holds.

    We do not know how to check $\dim H^1_f(\bfQ, {\rm ad}\ov{\rho}_{\phi}(k-2))\leq 1$, as the corresponding divisible Selmer group is not critical (in the sense of Deligne). Note that $\dim H^1_f(\bfQ, {\rm ad}\ov{\rho}_{\phi}(k-2))=\dim H^1_f(\bfQ, {\rm ad}^0\ov{\rho}_{\phi}(k-2))$ by Proposition \ref{prop6.10} since neither prime $\ell$ divides $B_{24}$.
\end{rem}


\section{(Non-)principality of Eisenstein ideals}

In this section we formulate conditions when the Eisenstein ideal of the local Hecke algebra  acting on $S_k(\Gamma_2)$ is non-principal and $\dim_{\bfF}H^1_f(\bfQ, \ad^0\ov{\rho}_{\phi}(k-2))>1.$ In particular, in that case $R\not \cong \Oo$.

Let $\bfT'$ be as in Section \ref{sec:intro}.
Let $\bfT$ denote the $\Oo$-subalgebra of $\bfT'\otimes_{\bfZ}\Oo$ generated by the operators $T^{(2)}(p)$ and $T_1^{(2)}(p^2)$ for all primes $p\nmid \ell$.
 Since strong multiplicity one holds in the level one case, we can choose an orthogonal basis $\mN'$ of $S_{k}(\Gamma_{2})$ consisting of eigenforms for all the operators in $\bfT$.


 Each $g\in \mN'$ gives rise to $\psi_g\in\Hom_{\Oo-{\rm alg}}(\bfT, \Oo)$ where $\psi_g(T)=\lambda_g(T)$, with $\lambda_g(T)$  the eigenvalue of the operator $T$ corresponding to $g$.
Thus we get a map
$\Psi: \mN'\to \Hom_{\Oo-{\rm alg}}(\bfT, \Oo)$ given by $g\mapsto \lambda_g$, which by strong multiplicity one is an injection.

\begin{lemma} \label{l7.1}
The natural $\Oo$-algebra map \be \label{nat1} \bfT\to \prod_{g\in \mN'}\Oo \quad \textup{given by} \quad  T\mapsto (\psi_g(T))_{g}\ee  is injective and has finite cokernel, i.e. $\bfT$ can be viewed as a  lattice in $\prod_{g\in \mN'}\Oo$.

\end{lemma}

\begin{proof}
The injectivity follows from the fact that the elements of $\mN'$ form a basis.

We will now show that the map has finite cokernel. Note that the (set) map $\Psi \otimes \ov{\bfQ}_\ell: \mN'\to \Hom_{\ov{\bfQ}_{\ell}-{\rm alg}}(\bfT\otimes \ov{\bfQ}_{\ell},\ov{\bfQ}_{\ell})\hookrightarrow \Hom_{\ov{\bfQ}_{\ell}}(\bfT\otimes \ov{\bfQ}_{\ell}, \ov{\bfQ}_{\ell})$ given by $g \mapsto \lambda_g \otimes \ov{\bfQ}_\ell$ is injective (because $\Psi$ is injective), and strong multiplicity one implies that no non-trivial linear relation $\sum_{g\in \mN'} c_g \lambda_{g}=0$ can hold. 
Thus the set $\{\lambda_g\mid g\in \mN'\}$ is a linearly independent subset of $\Hom_{\ov{\bfQ}_{\ell}}(\bfT\otimes \ov{\bfQ}_{\ell}, \ov{\bfQ}_{\ell})$. Hence \be \label{dim1} \dim_{\ov{\bfQ}_{\ell}}\bfT\otimes \ov{\bfQ}_{\ell}=\dim_{\ov{\bfQ}_{\ell}}\Hom_{\ov{\bfQ}_{\ell}}(\bfT\otimes \ov{\bfQ}_{\ell}, \ov{\bfQ}_{\ell})\geq \# \mN'.\ee


 Tensoring the map \eqref{nat1} with $\ov{\bfQ}_{\ell}$ we get a corresponding map  $\bfT\otimes\ov{\bfQ}_{\ell}\to \prod_{g\in \mN'}\ov{\bfQ}_{\ell}$, which is injective as \eqref{nat1} is.  Thus it must be surjective by \eqref{dim1}. Hence the map \eqref{nat1} has finite cokernel.
\end{proof}

We now identify $\bfT$ with the image of the map \eqref{nat1} and note that 
$\bfT=\prod_{\fm \in {\rm MaxSpec}\bfT}\bfT_{\fm},$ where $\bfT_{\fm}$ is the localization of $\bfT$ at the maximal ideal $\fm$.
Let $\mN$ be the subset of $\mN'$ consisting of all the $g\in \mN'$ which satisfy $$\psi_g(T)\equiv \lambda_{E_{\phi}^{1,2}}(T) \pmod{\lambda}\quad \textup{for all $T\in \bfT$}.$$

We write $\fm$ for the corresponding maximal ideal. 
Set $J\subset \bfT$ to be the Eisenstein ideal, i.e., $J$ is the ideal of $\bfT$ generated by the set
$\{T^{(2)}(p)-(\tr \rho_{\phi}(\Frob_p)+\tr\rho_{\phi}(k-2)(\Frob_p))\mid p\neq \ell\}.$
Write $J_{\fm}$ to be the image of $J$ under the canonical map $\bfT\to \bfT_{\fm}$.

Recall that  we fixed in Section \ref{s5.2} the weight $k \geq 12$ even and prime $\ell > 4k-5$ and imposed Assumption \ref{admis} on the field $E/\bfQ_\ell$. We also fixed the Fontaine-Laffaille interval $I=[3-2k, 2k-3]$.
Let $\phi\in S_k(\Gamma_1)$ be a newform such that $\ov{\rho}_{\phi}$ is irreducible.

For the rest of this section we also impose Assumption \ref{assumption section 6} and fix the corresponding $f \in S_k(\Gamma_2)$.  Then $f\in \mN$, i.e., $\bfT_{\fm}/J_{\fm}\neq 0$.
Let $R=R_{\ov{\rho}_f}$ be the universal deformation ring defined in Section \ref{Modularity}.

\begin{theorem} \label{R=T thm}
Recall that we impose Assumptions \ref{admis} and \ref{assumption section 6}.  Then
there exists a surjective $\Oo$-algebra map $\varphi: R\to \bfT_{\fm}$ such that $\varphi(I_{\rm re})=J_{\fm}$ and $J_{\fm}$ is a maximal ideal of $\bfT_{\fm}$. If, in addition $\dim_{\bfF} H^1_f(\bfQ, {\rm ad}\ov{\rho}_{\phi}(k-2))\leq 1$, then all of the following are true:
\begin{itemize}
    \item the map $\varphi$ is an isomorphism;
    \item the Hecke ring $\bfT_{\fm}$ is isomorphic to $\Oo$;
    \item the Eisenstein ideal $J_{\fm}$ is principal.
\end{itemize}

\end{theorem}

\begin{proof} Let $g\in \mN$.
Then by Lemma \ref{Ribet1} there exists a $G_{\bfQ}$-stable lattice with respect to which one has $\ov{\rho}_{g}=\bmat \ov{\rho}_{\phi}&* \\  & \ov{\rho}_{\phi}(k-2)\emat$
and is not semi-simple. Hence the $*$ gives rise to an element in $H^1_f(\bfQ, W[\lambda])$, where  $W={\rm ad}^0\rho_{\phi}(2-k)\otimes_{\Oo}E/\Oo$.

By \eqref{trivial invariant}  and Proposition \ref{prop5.8} we get $H^1_f(\bfQ, W[\lambda])=H^1_f(\bfQ, W)[\lambda]$. The latter group is cyclic by Assumption \ref{assumption section 6} (ii), so we must have that $\ov{\rho}_{g}\cong \ov{\rho}_f$, and so after adjusting the basis if necessary we get that $\rho_{g}$ is a deformation of $\ov{\rho}_f$.

This implies that for every $g\in \mN$ we get an $\Oo$-algebra (hence continuous) map $\varphi_{g}:R\to \Oo$ with the property that $\tr\rho^{\rm univ}(\Frob_p) \mapsto \lambda_{g}(T^{(2)}(p))$. This property completely determines $\varphi_{g}$ because $R$ is topologically generated by the set $\{\tr\rho^{\rm univ}(\Frob_p)\mid p\neq \ell\}$ by Proposition 7.13 in \cite{BergerKlosin13}.  Putting these maps together we get an $\Oo$-algebra map $\varphi: R\to \prod_{g\in \mN}\Oo$ whose image is an $\Oo$-subalgebra of $\prod_{g\in \mN}\Oo$ generated by $\{T^{(2)}(p)\mid p\neq \ell\}$. 
Note that $\varphi(R)\subset \bfT_{\fm}$. To see the opposite inclusion consider the characteristic polynomial $f_p(X)\in R[X]$ of $\rho^{\rm univ}(\Frob_p)$ for $p\neq \ell$.
Combining Theorem \ref{Weissauer} with the definition of $L_p(X, f; {\rm spin})$ we see that the coefficient at $X^2$ is mapped by $\varphi$ to $T^{(2)}(p)^2-T^{(2)}(p^2)-p^{2k-4}\in \prod_{g\in \mN}\Oo$. As $T^{(2)}(p)$ and $p^{2k-4}$ both belong to  $\varphi(R)$, so therefore must $T^{(2)}(p^2)$. We now use the fact (\cite[3.3.38]{Andrianov}, \cite[pg. 547]{JohnsonLeung}) that
\begin{equation*}
p T_1^{(2)}(p^2) = T^{(2)}(p)^2 - T^{(2)}(p^2) - p(p^2+p+1) T(\diag(p,p,p,p))
\end{equation*}
to conclude that $T_1^{(2)}(p^2) \in \varphi(R)$.  Hence $\varphi(R)$ contains all the Hecke operators away from $\ell$, i.e., $\varphi(R)=\bfT_{\fm}$.
 We denote the resulting $\Oo$-algebra epimorphism $R\to \bfT_{\fm}$ again by $\varphi$.
We claim that $\varphi(I_{\rm re})\subset J_{\fm}$.

Indeed, 
using the Chebotarev Density Theorem, one sees that
$$\tr \rho^{\rm univ}\equiv \tr \rho_{\phi}+\tr \rho_{\phi}(k-2)\pmod{\varphi^{-1}(J_{\fm})},$$ so $I_{\rm re}\subset \varphi^{-1}(J_{\fm})$.
As $\varphi$ is a surjection, this implies that $\varphi(I_{\rm re})\subset J_{\fm}$.
Hence $\varphi$ gives rise to a sequence of $\Oo$-algebra surjections $R/I_{\rm re}\to \bfT_{\fm}/\varphi(I_{\rm re})\to \bfT_{\fm}/J_{\fm}$. As $R/I_{\rm re}=\bfF$ by Corollary \ref{Osurj} we conclude that all these surjections are isomorphisms (note that $\bfT_{\fm}/J_{\fm}\neq 0$), hence $\varphi(I_{\rm re})=J_{\fm}$ and $J_{\fm}$ is maximal. This proves the first claim.

Now  assume in addition that $\dim H^1_f(\bfQ, {\rm ad}\ov{\rho}_{\phi}(k-2))\leq 1$. Then
 Theorem \ref{main} gives us that $R=\Oo$, so we get that $\varphi$ is an isomorphism, and so $R\cong \bfT_{\fm}\cong \Oo$. Hence $J_{\fm}$ is a principal ideal.
\end{proof}

\begin{corollary} \label{not cyclic}
If $J_{\fm}$ is not principal, then $\dim_{\bfF}H^1_f(\bfQ, \ad\ov{\rho}_{\phi}(k-2))>1.$ If in addition $\ell \nmid B_{k-2}$ then
$\dim_{\bfF}H^1_f(\bfQ, \ad^0\ov{\rho}_{\phi}(k-2))>1.$
\end{corollary}
\begin{proof}
    The first inequality is just a restatement of one of the claims of Theorem \ref{R=T thm}.
 The second follows from the first one and Proposition \ref{prop6.10}.
\end{proof}


\begin{prop} \label{BKK}
 For each $g\in \mN$ write $m_{g}$ for the largest positive integer $m$ such that $g\equiv E_{2,1}^{\phi}$ mod $\lambda^{m}$. If \be \label{BKK1} \val_{\ell}(\# \bfT_{\fm}/J_{\fm})<[\bfF:\bfF_{\ell}] \cdot \sum_{g\in \mN}m_{g}\ee then $J_{\fm}$ is not principal.

\end{prop}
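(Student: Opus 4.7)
My plan is to compute $\val_\ell(\#\bfT_\fm/J_\fm)$ under the additional hypothesis that $J_\fm$ is principal and show that it must then equal $\frac{[E:\bfQ_\ell]}{e}\sum_{g\in\mN}m_g$; the contrapositive of this equality will yield the proposition.

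The first step is to identify the ideal $\psi_g(J_\fm)\subset\Oo$ for each $g\in\mN$. Since $J$ is generated by the elements $T^{(2)}(p)-a_p(1+p^{k-2})$ for $p\neq\ell$, where $a_p=\tr\rho_\phi(\Frob_p)$, one has
\[
\psi_g(J_\fm)=\bigl(\lambda_g(T^{(2)}(p))-\lambda_{E^{2,1}_\phi}(T^{(2)}(p)):p\neq\ell\bigr)\Oo=\varpi^{n_g}\Oo
\]
for some $n_g\in\bfZ_{\geq 0}$. I will then verify that $n_g=m_g$. The inequality $n_g\leq m_g$ is immediate from the definition of $m_g$; for the reverse I observe that the congruence $\tr\rho_g(\Frob_p)\equiv\tr(\rho_\phi\oplus\rho_\phi(k-2))(\Frob_p)\pmod{\varpi^{n_g}}$ extends by Chebotarev to all $\sigma\in G_\bfQ$, and applying this to $\sigma=\Frob_p^i$ for $i=1,\dots,4$ together with Newton's identities (whose denominators $2,6,24$ are invertible in $\Oo$ because $\ell>4k-7>4$) shows that the full characteristic polynomial of $\rho_g(\Frob_p)$ agrees with that of $(\rho_\phi\oplus\rho_\phi(k-2))(\Frob_p)$ modulo $\varpi^{n_g}$, whence the $T^{(2)}_1(p^2)$-eigenvalues also agree modulo $\varpi^{n_g}$, giving $m_g\geq n_g$. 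Finiteness of $m_g$ comes for free from Lemma \ref{lem:galirred}, as $m_g=\infty$ would force $\rho_g^{\rm ss}=\rho_\phi\oplus\rho_\phi(k-2)$ by Brauer--Nesbitt, contradicting the irreducibility of $\rho_g$.

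Suppose now $J_\fm=(x)$. Then $\psi_g(x)\Oo=\psi_g(J_\fm)=\varpi^{m_g}\Oo$, so $\val_\varpi(\psi_g(x))=m_g<\infty$ for every $g\in\mN$, and in particular $x$ is not a zero-divisor in $\bfT_\fm$. The injection $\bfT_\fm\hookrightarrow\prod_{g\in\mN}\Oo$ (which has finite cokernel) realises $\bfT_\fm$ as a free $\Oo$-module of rank $|\mN|$ sitting inside $\bfT_\fm\otimes_\Oo E\cong\prod_{g\in\mN}E$. Multiplication by $x$ on the free $\Oo$-module $\bfT_\fm$ has $\Oo$-linear determinant equal to its $E$-linear determinant on $\bfT_\fm\otimes_\Oo E$, and the latter is easy to compute because on $\prod_g E$ the element $x$ acts as $\diag(\psi_g(x))_{g\in\mN}$. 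Hence
\[
\length_\Oo(\bfT_\fm/J_\fm)=\val_\varpi\!\Bigl(\prod_{g\in\mN}\psi_g(x)\Bigr)=\sum_{g\in\mN}m_g,
\]
and multiplying by the residue degree $f=[E:\bfQ_\ell]/e$ to convert $\Oo$-length into $\val_\ell$ of the cardinality gives
\[
\val_\ell(\#\bfT_\fm/J_\fm)=\frac{[E:\bfQ_\ell]}{e}\sum_{g\in\mN}m_g.
\]
The strict inequality \eqref{BKK1} thus contradicts the assumption that $J_\fm$ is principal. The only non-routine step is the identification $n_g=m_g$, which rests on being able to pass from trace congruences to congruences of full characteristic polynomials via Newton's identities; everything else is a standard lattice-index computation.
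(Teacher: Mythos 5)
Your argument is correct and arrives at the same factorization $\val_\ell(\#\bfT_\fm/J_\fm)=\frac{[E:\bfQ_\ell]}{e}\sum_{g}m_g$ under principality, so the overall strategy matches the paper's; but you take a genuinely more self-contained route for the two key ingredients. Where the paper simply asserts $\phi_g(J_\fm)=\lambda^{m_g}\Oo$ and then cites Proposition~2.3 of \cite{BergerKlosinKramer13} for the product formula $\#\bfT_\fm/J_\fm=\prod_g\#T_g/J_g$, you (a) re-derive that product formula directly from the realisation of $\bfT_\fm$ as a free $\Oo$-lattice of rank $|\mN|$ inside $\prod_g\Oo$ and the computation of $\det(\cdot\, x)$ after extending scalars to $E$, and (b) actually justify that $\phi_g(J_\fm)=\varpi^{m_g}\Oo$, i.e.\ that the exponent coming from the $T^{(2)}(p)$-generators of $J$ alone matches the full congruence depth $m_g$, via the Newton-identities trick to propagate trace congruences to characteristic-polynomial congruences and hence to the $T_1^{(2)}(p^2)$-eigenvalues through the spin $L$-factor. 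This fills in a step the paper glosses over. One small slip in the prose: you say ``$n_g\le m_g$ is immediate from the definition of $m_g$,'' but the direction that is immediate is $m_g\le n_g$ (congruence of all Hecke eigenvalues is the stronger condition, so the all-eigenvalue depth $m_g$ can only be smaller than the $T^{(2)}(p)$-only depth $n_g$); your Newton-identities argument then supplies the non-trivial reverse inequality $m_g\ge n_g$. The labels are swapped, but the two arguments you give are exactly the two halves needed, so the proof stands. You should also note explicitly that the Chebotarev/Newton step requires $2,3,4$ to be invertible in $\Oo$, which holds because $\ell>4k-7\ge 5$ under the hypothesis $k>2$; you do mention $\ell>4$, which is the right bound.
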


\begin{proof}
    Set $A = \prod_{g\in \mN}A_{g}$, where $A_{g}=\Oo$ for all $g\in \mN$. Let $\phi_{g} : A \to A_{g}$ be
the canonical projection. Since by Lemma \ref{l7.1} $\bfT$ is a full rank $\Oo$-submodule of $\prod_{g \in \mN'} \Oo$  we conclude that the local complete $\Oo$-subalgebra $\bfT_{\fm} \subset A$ is of full
rank as an $\Oo$-submodule and $J_{\fm} \subset \bfT_{\fm}$ is an ideal of finite index. Set $T_{g} = \phi_{g}(\bfT_{\fm})=A_{g}=\Oo$
and $J_{g} = \phi_{g}(J_{\fm})=\lambda^{m_{g}}\Oo$.  Hence we are in the setup of Section 2 of \cite{BergerKlosinKramer13}. Assume $J_{\fm}$ is principal. Then Proposition 2.3 in \cite{BergerKlosinKramer13} gives us that \be \label{BKK2} \#\bfT_{\fm}/J_{\fm}=\prod_{g\in \mN} \#T_{g}/J_{g}.\ee
Note that one has \be \label{eq7.5}\val_{\ell}\left(\prod_{g\in \mN} \#T_{g}/J_{g}\right)= [\bfF:\bfF_{\ell}] \cdot \sum_{g\in \mN}m_{g}.\ee
This equality together with \eqref{BKK2} contradicts the inequality \eqref{BKK1}. \end{proof}

\begin{corollary} \label{not cyclic 2}
Let $m_{g}$ be defined as in Proposition \ref{BKK}.  If $ \sum_{g\in \mN}m_{g}>1$ then $J_{\fm}$ is not principal and  $\dim_{\bfF}H^1_f(\bfQ, \ad\ov{\rho}_{\phi}(k-2))>1.$ If in addition $\ell \nmid B_{k-2}$ then
$\dim_{\bfF}H^1_f(\bfQ, \ad^0\ov{\rho}_{\phi}(k-2))>1.$
\end{corollary}

\begin{proof}
Note that from the proof of Theorem \ref{R=T thm} we get that $\bfT_{\fm}/J_{\fm}=\bfF$, even without assuming $\dim_{\bfF}H^1_f(\bfQ, \ad\ov{\rho}_{\phi}(k-2))\leq 1$.
Assume that $J_\fm$ is principal. Then from \eqref{BKK2} and \eqref{eq7.5} we conclude that $\sum_{g\in \mN}m_{g}=1$, which contradicts our assumption. Hence $J_\fm$ is not principal.
The Selmer group inequalities now follow from Corollary \ref{not cyclic}.
\end{proof}


\begin{rem}
    Corollary \ref{not cyclic} directly ties the cyclicity of the non-critical Selmer group $H^1_f(\bfQ, \ad\ov{\rho}_{\phi}(k-2))$ with the principality of the Eisenstein ideal $J_\fm$. We note that Assumption \ref{assumption section 6}(ii) implies the equality $\bfT_\fm/J_\fm=\bfF$. Contrary to what one might think, the existence of several forms $g\equiv E_{2,1}^{\phi}$ mod $\lambda$   does not preclude this equality. For example, if there are exactly two linearly independent eigenforms $g_1, g_2 \in \mN$ with $m_{g_1}=m_{g_2}=1$ such that $g_1 \not \equiv g_2 \mod{\lambda^2}$ then $\bfT_\fm \cong \Oo \times_\bfF \Oo=\{(a,b) \in \Oo \times \Oo \mid a \equiv b \mod{\lambda}\}$ and in this case  $J_\fm$ is the maximal ideal, i.e. $\bfT_\fm/J_\fm=\bfF$, so Corollary \ref{not cyclic 2} applies and $\dim_{\bfF}H^1_f(\bfQ, \ad\ov{\rho}_{\phi}(k-2))>1$.

\end{rem}

\bibliographystyle{apalike}
\bibliography{new_bib.bib}










\end{document}